\newtheorem{thm}{Theorem}
\newtheorem{cor}[thm]{Corollary}
\newtheorem{lem}[thm]{Lemma}
\newtheorem{pro}[thm]{Proposition}
\theoremstyle{remark}
\newtheorem{rem}[thm]{Remark}
\newtheorem*{opq}{Problem}
\theoremstyle{definition}
\newtheorem{exa}[thm]{Example}
\newcommand*{\card}[1]{\mathrm{card}(#1)}
\DeclareMathOperator{\D}{d\hspace{-0.25ex}}
\DeclareMathOperator{\dzii}{{\mathsf{Chi}}}
\DeclareMathOperator{\koo}{{\mathsf{root}}}
\DeclareMathOperator{\paa}{{\mathsf{par}}}
\DeclareMathOperator{\M}{m}
\newcommand*{\ascr}{\mathscr A}
\newcommand*{\borel}[1]{{\mathfrak B}(#1)}
\newcommand*{\cbb}{\mathbb C}
\newcommand*{\cfw}{C_{\phi,w}}
\newcommand*{\cfwa}{C_{\phi,w_\alpha}}
\newcommand*{\esf}{\mathsf{E}}
\newcommand*{\efw}{\mathsf{E}_{\phi,w}}
\newcommand*{\psf}{\mathsf{P}}
\newcommand*{\pfw}{\mathsf{P}_{\phi,w}}
\newcommand*{\dz}[1]{{\EuScript D}(#1)}
\newcommand*{\dzi}[1]{\dzii(#1)}
\newcommand*{\Ge}{\geqslant}
\newcommand*{\hh}{\mathcal H}
\newcommand*{\hsf}{{\mathsf h}}
\newcommand*{\hfw}{{\mathsf h}_{\phi,w}}
\newcommand*{\is}[2]{\langle#1,#2\rangle}
\newcommand*{\jd}[1]{\EuScript N(#1)}
\newcommand*{\lambdab}{{\boldsymbol\lambda}}
\newcommand*{\Le}{\leqslant}
\newcommand*{\nbb}{\mathbb N}
\newcommand*{\ogr}[1]{\boldsymbol B(#1)}
\newcommand*{\pa}[1]{\paa(#1)}
\newcommand*{\rbb}{\mathbb R}
\newcommand*{\rbop}{{\overline{\rbb}_+}}
\newcommand*{\slam}{S_{\boldsymbol \lambda}}
\newcommand*{\smalloplus}{\raise0pt\hbox{$\scriptscriptstyle \oplus$}}
\newcommand*{\tcal}{{\mathscr T}}
\newcommand*{\zbb}{\mathbb Z}
\begin{document}
\setstretch{1.1}
\title[Weakly centered  wco's]{Weakly centered weighted composition operators in $L^2$-spaces}

\author[P.\ Budzy\'{n}ski]{Piotr Budzy\'{n}ski}
\address{Katedra Zastosowa\'{n} Matematyki, Uniwersytet Rolniczy w Krakowie, ul.\ Balicka 253c, 30-198 Kra\-k\'ow, Poland}
\email{piotr.budzynski@urk.edu.pl}

\date{\today}

\begin{abstract}
Weakly centered and spectrally weakly cenetered weighted composition operators in $L^2$-spaces are characterized. Criteria for existence of invariant subspaces are given. Additional results and examples are supplied.
\end{abstract}
\maketitle

\section{Introduction}
This paper is concerned with operators in Hilbert spaces satisfying a commutative relation 
\begin{align}\label{fatboy}
TT^*T^*T=T^*TTT^*
\end{align}
(or its spectral analogue). We call them {\em weakly centered}. They have been formally introduced by Campbell (under a name of binormal operators). He characterized them and showed that {\em weakly centered hyponormal operators have invariant subspaces} (see \cite{1972-pams-campbell, c-pjm-1974}). Embry considered operators satisfying \eqref{fatboy} even earlier in relation to normality. In fact, she showed among other things that a {\em weakly centered operator with a positive real part or $0$ outside of its numerical range is normal} (see \cite{1966-pjm-embry, 1970-pjm-embry}). Later, these operators were subject to a research related to similarity to contractions carried by Paulsen, Pearcy, and Petrovi{\'c} (see \cite{1992-jfa-petrovic, 1995-jfa-paulsen-pearcy-petrovic}). All the above results spark our interest in weakly centered operators.

We restrict our attention to a particular class of Hilbert space operators -- the class of weighted composition operators in $L^2$-spaces, which act according to the following formula
$$f\longmapsto w\cdot (f\circ \phi).$$
The class includes multiplication operators, composition operators, partial (weighted and unweighted) composition operators and classical weighted shifts. The last ones have a very interesting generalization -- the so called weighted shifts on directed trees (see \cite{2012-mams-j-j-s}). As it turns out, all the weighted shifts on directed trees with cardinality less or equal to $\aleph_0$ are weighted composition operators too. Studies over weighted composition operators are rooted in the classical Banach-Stone theorem on one side and in ergodic theory on the other. The theory is lively and has been strengthened with new interesting findings in recent years (see eg., \cite{2017-bims-azi-jab-jaf, 2022-mjm-azimi-jabbarzadeh, 2024-rim-budzynski, 2015-jfa-budzynski-jablonski-jung-stochel, 2017-aim-budzynski-jablonski-jung-stochel, 2018-lnim-budzynski-jablonski-jung-stochel, 2012-asm-pozzi, 2024-laa-stochel-stochel})

Motivated by results of Campbell we provide a characterization of weakly centered weighted composition operators (see Theorem \ref{wcent01}) and a criterion for existence of invariant subspaces (see Proposition \ref{suka01}). The results, as per usual in the context of weighted composition operators, are written in terms of the associated Radon-Nikodym derivative and conditional expectation. The proofs rely on classical measure theoretic methods but we use also the Aluthge transform, an operator theoretic tool that was introduced to study $p$-hyponormal and $\log$-hyponormal operators (see \cite{1990-ieot-aluthge}), and later led to numerous substantial results on the transform itself and its relation to the associated operator (see eg., \cite{2005-ieot-cho-jung-lee, 2003-pjm-foias-jung-ko-pearcy, 2003-ieot-jung-ko-pearcy, 2012-ieot-lee-lee-yoon}). 

Later in the paper we make a venture into the realm of unbounded operators. The condition characterizing weakly centered weighted composition operators make perfect sense whenever the operator in question is densely defined. A problem arising naturally is whether the conditions imply some sort of commutativity of $\cfw^*\cfw$ and $\cfw\cfw^*$ in the unbounded case. This leads us to introducing {\em spectrally weakly centered} operators. We show that spectrally weakly centered weighted composition operators are characterized essentially in the same way as bounded ones (Theorem \ref{gamon01}). We also provide an unbounded weighted composition operator analogue of the Campbell's (Theorem \ref{smutek01}).
%
\section{Preliminaries}
We write $\zbb$, $\rbb$, and $\cbb$ for the sets of integers, real numbers, and complex numbers, respectively. By $\nbb$, $\zbb_+$, and $\rbb_+$ we denote the sets of positive integers, nonnegative integers, and nonegative real numbers, respectively. $\rbop$ stands for $\rbb_+ \cup \{\infty\}$. $\borel{\cbb}$ denotes the $\sigma$-algebra of Borel subsetes of $\cbb$. $\ogr{\hh}$ stands for a $\mathcal{C}^*$-algebra of bounded operators on a (complex) Hilbert space $\hh$. If $A$ is an operator in $\hh$ (possibly unbounded), then $\dz{A}$, $\jd{A}$, $\mathcal{R}(A)$, and $A^*$ stand for the domain, the kernel, the range, and the adjoint of $A$, respectively. A densely defined operator $T$ in $\hh$ is {\em hyponormal} if $\dz{T}\subseteq \dz{T^*}$ and $\|T^*f\| \leqslant \|Tf\|$ for all $f\in\dz{T}$.

Throughout the paper we assume that $(X,\ascr, \mu)$ is a $\sigma$-finite measure space, $\phi$ is an $\ascr$-measurable {\em transformation} of $X$ (i.e., an $\ascr$-measurable mapping $\phi\colon X\to X$) and $w\colon X\to\cbb$ is $\ascr$-measurable. 

Let $\mu_w$ and $\mu_w\circ\phi^{-1}$ be measures on $\ascr$ defined by $\mu_w(\sigma)=\int_\sigma|w|^2\D\mu$ and $\mu_w\circ \phi^{-1}(\sigma)=\mu_w\big(\phi^{-1}(\sigma)\big)$ for $\sigma\in\ascr$. Assume that $\mu_w\circ\phi^{-1}$ is absolutely continuous with respect to  $\mu$. Then the operator 
\begin{align*}
\cfw \colon L^2(\mu) \supseteq \dz{\cfw} \to L^2(\mu)    
\end{align*}
given by
\begin{align*}
\dz{\cfw} = \{f \in L^2(\mu) \colon w \cdot (f\circ \phi) \in L^2(\mu)\},\quad
\cfw f  = w \cdot (f\circ \phi), \quad f \in \dz{\cfw},
\end{align*}
is well defined (see \cite[Proposition 7]{2018-lnim-budzynski-jablonski-jung-stochel}); here, as usual, $L^2(\mu)$ stands for the complex Hilbert space of all square $\mu$-integrable $\ascr$-measurable complex functions on $X$ (with the standard inner product). $\cfw$ is called a {\em weighted composition operator} (abbreviated to {\em wco} throughout the paper). By the Radon-Nikodym theorem (cf.\ \cite[Theorem 2.2.1]{ash}), there exists a unique (up to a set of $\mu$-measure zero) $\ascr$-measurable function $\hfw\colon X \to \rbop$ such that
 \begin{align*}
\mu_w \circ \phi^{-1}(\varDelta) = \int_{\varDelta} \hfw \D \mu, \quad
\varDelta \in \ascr.
\end{align*}
As it follows from \cite[Theorem 1.6.12]{ash} and \cite[Theorem 1.29]{rud}, for every $\ascr$-measurable function $f \colon X \to \rbop$ (or for every $\ascr$-measurable function $f\colon X \to \cbb$ such that $f\circ \phi \in L^1(\mu_w)$), we have
\begin{align*}
\int_X f \circ \phi \D\mu_w = \int_X f \, \hfw \D \mu.
\end{align*}
In particular, this implies that $\dz{\cfw}=L^2\big((1+\hfw)\D\mu\big)$. Recall that $\cfw$ is a bounded operator defined on the whole of $L^2(\mu)$ if and only if $\hfw$ is $\mu$-essentially bounded and, if this is the case, $\|\cfw\|^2=\|\hfw\|_{L^\infty(\mu)}$ (see \cite[Proposition 8]{2018-lnim-budzynski-jablonski-jung-stochel}). 

Recall that for every closed densely defined operator $A$ in a Hilbert space $\hh$ there exists a (unique) partial isometry $U$ such that $\jd{U}=\jd{A}$ and $A=U|A|$, where $|A|$ is the square root of $A^*A$; the operator $U$ is called the {\em phase} of $A$ and $|A|$ is the {\em modulus} of $A$. In view of \cite[Theorem 18]{2018-lnim-budzynski-jablonski-jung-stochel}, assuming $\cfw$ is densely defined, the phase of $\cfw$ is equal to $C_{\phi,\widetilde w}$ with $\widetilde w\colon C\to \cbb$ given by
\begin{align}\label{terefere-1}
\widetilde w= \frac{w}{\sqrt{\hfw\circ\phi}},
\end{align}
and the modulus of $\cfw$ is equal to $M_{\sqrt{\hfw}}$. Here and later on, for any given an $\ascr$-measurable function $g\colon X\to \cbb$, $M_g$ denotes the operator of multiplication by $g$ acting in $L^2(\mu)$.

Assume that $\cfw$ is densely defined. Then, for a given $\ascr$-measurable function $f\colon X\to \rbop$ (or $f\colon X\to \cbb$ such that $f\in L^p(\mu_w)$ with some $1\leqslant p<\infty$) one may consider $\efw(f)$, the conditional expectation of $f$ with respect to the $\sigma$-algebra $\phi^{-1}(\ascr)$ and the measure $\mu_w$ (see \cite[Section 2.4]{2018-lnim-budzynski-jablonski-jung-stochel}): $\efw(f)$ is a unique (up to set of $\mu_w$-measure) $\phi^{-1}(\ascr)$-measurable function such that \begin{align*}
\int_\varDelta f \D\mu_w=\int_\varDelta \efw(f)\D\mu_w,\quad \varDelta\in \phi^{-1}(\ascr).
\end{align*} 
Moreover, there is a unique (up to sets of $\mu$-measure zero) $\ascr$-measurable function $g$ on $X$ such that $g=0$ a.e. $[\mu]$ on $\{\hfw=0\}$ and $\efw(f)=g\circ\phi$ a.e. $[\mu_w]$. We will denote this function by $\efw(f)\circ\phi^{-1}$. Recall that $\efw$ can be regarded as a linear contraction on $L^p(\mu_w)$, $p\in[1,\infty]$, which leaves invariant the convex cone $L^p_+(\mu_w)$ of all $\rbb_+$-valued members of $L^2(\mu_w)$; moreover, $\efw$ is an orthogonal projection on $L^2(\mu_w)$. 

Considering $w\equiv 1$ leads to a large subclass of weighted composition operators, the so-called composition operators. Namely, assuming that $\phi\colon X\to X$ is $\ascr$-measurable and satisfies $\mu\circ\phi^{-1}\ll\mu$, the operator $C_{\phi, 1}$ is well defined. We call it the {\em composition operator} induced by $\phi$; for brevity we use the notation $C_\phi:=C_{\phi,1}$. The corresponding Radon-Nikodym derivative $\mathsf{h}_{\phi,1}$ and the conditional expectation $\esf_{\phi,1}$ are denoted by $\mathsf{h}_\phi$ and $\esf_\phi$, respectively. We caution the reader that $C_\phi$ might not be well defined even if $\cfw$ is an isometry. In particular, if this is the case, $\hsf_\phi$ and $\esf_{\phi}$ do not exist whereas $\hfw$ and $\efw$ do (see \cite[pg. 71]{2018-lnim-budzynski-jablonski-jung-stochel}).

Let $\alpha\in(0,1]$. Suppose $A$ is closed densely defined operator in $\hh$ and $A = U|A|$ is its polar decomposition. The $\alpha$-Aluthge transform of $A$ is defined as
\begin{align*}
\Delta_\alpha(A) = |A|^\alpha U|A|^{1-\alpha}.
\end{align*}
Recall that $\Delta_\alpha(A)$ may have a trivial domain. This can happen for $A=\cfw$ (see \cite{2020-mn-benhida-budzynski-trepkowski, 2015-jmaa-trepkowski}).

The Aluthge transform of wco's has been described in \cite[Theorem 3.1]{2020-mn-benhida-budzynski-trepkowski}. Recall that whenever $\alpha\in(0,1]$ and $\cfw$ is densely defined, we have
\begin{align*}
\dz{\Delta_\alpha(\cfw)}=L^2\Big(\big(1+\hfw^{1-\alpha}+\efw(\hfw^\alpha)\circ\phi^{-1}\hfw^{1-\alpha}\big)\D\mu\Big)
\end{align*}
and
\begin{align*}
\Delta_\alpha(\cfw)\subseteq \cfwa,
\end{align*}
where $w_\alpha\colon X \to \cbb$ is given by
\begin{align}\label{terefere1}
w_\alpha=w\cdot \bigg(\frac{\hfw}{\hfw\circ\phi}\bigg)^{\alpha/2}\quad \text{a.e.\ $[\mu]$}.
\end{align}
The operator $\Delta_\alpha(\cfw)$ is closable and $\overline{\Delta_\alpha(\cfw)}=\cfwa$. In particular, if $\cfw$ is a bounded operator on ${L^2(\mu)}$, we have 
\begin{align}\label{auto01}
\Delta_\alpha(\cfw)=\cfwa    
\end{align}
Assuming additionally that $C_\phi$ is a well defined operator on $L^2(\mu)$ (this is not automatically satisfied as shown in \cite[Example 102]{2018-lnim-budzynski-jablonski-jung-stochel}), we deduce from \cite[Theorem 110]{2018-lnim-budzynski-jablonski-jung-stochel} that $\Delta_\alpha(\cfw)=M_{w_\alpha}C_\phi$.
\section{The bounded case}
Weakly centered wco's can be characterized in terms of the associated Radon-Nikodym derivative and the (partial) isometric part of the polar decomposition of the Aluthge transform. Some additional notation will be helpful: we set
\begin{align*}
\widetilde w_{\alpha}=\frac{w_\alpha}{\sqrt{\hsf_{\phi,w_\alpha}\circ\phi}},
\end{align*}
for $\alpha>0$ and any $w\colon X\to \mathbb{C}$ such that the above make sense (cf. \eqref{terefere-1} and \eqref{terefere1}).
\begin{thm}\label{wcent01}
Assume $\cfw\in\ogr{L^2(\mu)}$. Then the following conditions are equivalent:
\begin{itemize}
\item[(i)] $\cfw$ is weakly centered,
\item[(ii)] $C_{\phi, \chi_{\{\hfw>0\}} \widetilde w}=C_{\phi, \widetilde w_{1/2}}$,
\item[(iii)] $\hfw=\efw \big(\hfw\big)$ a.e. $[\mu_w]$.
\end{itemize}
\end{thm}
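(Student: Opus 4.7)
The plan is to take (iii) as the pivot and prove (i) $\Leftrightarrow$ (iii) and (ii) $\Leftrightarrow$ (iii) separately. The main computational preparation is a formula for $\cfw\cfw^{*}$ in terms of $\hfw$ and $\efw$. Starting from $\langle \cfw f,g\rangle = \int w(f\circ\phi)\bar g\,\D\mu$, applying the transfer theorem with respect to $\mu_w$, and replacing $\phi^{-1}(\ascr)$-integrands by their conditional expectations, one expects
\begin{align*}
\cfw^{*} g = \hfw\cdot\bigl(\efw(g/w)\circ\phi^{-1}\bigr),\qquad \cfw\cfw^{*} g = w\,(\hfw\circ\phi)\,\efw(g/w).
\end{align*}
Combined with the standard identity $\cfw^{*}\cfw=M_{\hfw}$, the weakly centered equation $(\cfw\cfw^{*})(\cfw^{*}\cfw)=(\cfw^{*}\cfw)(\cfw\cfw^{*})$, after cancelling the common factor $w(\hfw\circ\phi)$ and substituting $h=g/w$, collapses to the averaging identity $\efw(\hfw\,h)=\hfw\,\efw(h)$ for all $h\in L^{2}(\mu_w)$.

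Then \emph{(i) $\Leftrightarrow$ (iii)} follows since, $\hfw$ being in $L^{\infty}(\mu)$ by boundedness of $\cfw$, the averaging identity above is the standard characterisation of $\hfw$ being $\phi^{-1}(\ascr)$-measurable a.e.\ $[\mu_w]$, equivalently $\efw(\hfw)=\hfw$ a.e.\ $[\mu_w]$, i.e.\ (iii). One direction is the averaging property of $\efw$; the other is obtained by testing against $h=\chi_{\Delta}$ for $\Delta$ of finite $\mu_w$-measure and letting $\Delta\uparrow X$ by $\sigma$-finiteness.

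For \emph{(ii) $\Leftrightarrow$ (iii)}, I would compute $\hsf_{\phi,w_{1/2}}$ directly. Using $\D\mu_{w_{1/2}}=(\hfw/\hfw\circ\phi)^{1/2}\,\D\mu_w$ (from \eqref{terefere1}), pulling the $\phi^{-1}(\ascr)$-measurable factor $(\hfw\circ\phi)^{-1/2}$ out of the conditional expectation, and then transferring through $\phi$, one obtains $\hsf_{\phi,w_{1/2}}=\hfw^{1/2}\cdot\bigl(\efw(\hfw^{1/2})\circ\phi^{-1}\bigr)$. Plugging this into $\widetilde w_{1/2}=w_{1/2}/\sqrt{\hsf_{\phi,w_{1/2}}\circ\phi}$ together with $w_{1/2}=w(\hfw/\hfw\circ\phi)^{1/4}$, and comparing with $\chi_{\{\hfw>0\}}\widetilde w=\chi_{\{\hfw>0\}}w/\sqrt{\hfw\circ\phi}$, the equality of weights in (ii) collapses after cancellation to $\efw(\hfw^{1/2})=\hfw^{1/2}$ a.e.\ $[\mu_w]$, which for the nonnegative function $\hfw$ is equivalent to (iii).

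The main obstacle will be the bookkeeping on the null sets $\{w=0\}$, $\{\hfw=0\}$ and $\{\hfw\circ\phi=0\}$: one must note that $\mu_w\circ\phi^{-1}\ll\mu$ forces $w=0$ a.e.\ $[\mu]$ on $\{\hfw\circ\phi=0\}$ (so that the divisions by $w$ and by $\sqrt{\hfw\circ\phi}$ in both chains of reductions are legitimate), and one must verify that the factor $\chi_{\{\hfw>0\}}$ built into (ii) is exactly what is needed to absorb the indeterminacy of $\widetilde w_{1/2}$ on $\{\hfw=0\}$, where the modulus of $\Delta_{1/2}(\cfw)$ vanishes. Once these null-set matters are settled, the reduction of the operator identity (ii) to an a.e.\ identity of its weights is standard for wcos, and all the remaining manipulations are direct applications of the defining properties of $\efw$ and the transfer theorem.
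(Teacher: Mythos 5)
Your proposal is correct; the formulas $\cfw^*\cfw=M_{\hfw}$, $\cfw\cfw^*g=w\,(\hfw\circ\phi)\,\efw(g_w)$ and $\hsf_{\phi,w_{1/2}}=\sqrt{\hfw}\cdot\efw\big(\sqrt{\hfw}\big)\circ\phi^{-1}$ are all right, and the reductions go through. For (i) $\Leftrightarrow$ (iii), however, your route is genuinely different from the paper's. The paper proves the cycle (i)$\Rightarrow$(ii)$\Rightarrow$(iii)$\Rightarrow$(i): for (i)$\Rightarrow$(ii) it invokes the Ito--Yamazaki--Yanagida theorem that for a weakly centered $T=U|T|$ the phase of $\Delta_{1/2}(T)$ equals $U^*UU$, combined with $\Delta_{1/2}(\cfw)=C_{\phi,w_{1/2}}$; for (iii)$\Rightarrow$(i) it verifies Campbell's criterion that $|T|$ commutes with $U|T|U^*$. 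Your direct reduction of $TT^*T^*T=T^*TTT^*$ to the averaging identity $\efw(\hfw h)=\hfw\,\efw(h)$ for $h\in L^2(\mu_w)$ bypasses both the Aluthge transform and Campbell's theorem; it is more elementary, and it is essentially the strategy the paper adopts in the unbounded case (Theorem \ref{gamon01}), where the author notes that commutativity of $|\cfw|$ and $|\cfw^*|$ provides an alternative proof of Theorem \ref{wcent01}. Your (ii)$\Leftrightarrow$(iii) matches the paper's (ii)$\Rightarrow$(iii) computation, simply run in both directions instead of around the cycle. Two null-set points should be nailed down in the write-up: the cancellation of $w\,(\hfw\circ\phi)$ is legitimate a.e.\ $[\mu_w]$ because $\hfw\circ\phi\in(0,\infty)$ a.e.\ $[\mu_w]$; and in (ii)$\Rightarrow$(iii) the weight identity only yields $\efw\big(\sqrt{\hfw}\big)=\sqrt{\hfw}$ on $\{\hfw>0\}$ directly, so to obtain it on $\{\hfw=0\}$ as well you should use positivity of $\efw$ together with the fact that it preserves integrals over sets in $\phi^{-1}(\ascr)$. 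Handling this as you propose is in fact cleaner than the paper's intermediate assertion that (ii) forces $\hfw>0$ a.e.\ $[\mu_w]$, which is too strong (compare the example in Remark \ref{szaruga01}).
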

\begin{proof}
(i) $\Rightarrow$ (ii) According to \cite[Proposition 3.9]{2004-jot-ito-yamazaki-yanagida}, if $\cfw$ is weakly centered then the phase $V_{1/2}$ of $\Delta_{1/2}(\cfw)=V_{1/2} |\Delta_{1/2}(\cfw)|$ coincides with $U^*UU$, where $U$ is the phase of $\cfw=U|\cfw|$. In view of \eqref{auto01} we have $\Delta_{1/2}(\cfw)=C_{\phi, w_{1/2}}$ and so 
\begin{align}\label{cisza02}
V_{1/2}=C_{\phi, \widetilde w_{1/2}}.    
\end{align}
Since, $U=C_{\phi,\widetilde w}$, we see that $U^*U=M_{{\hsf_{\phi,\widetilde w}}}=M_{\chi_{\{\hfw>0\}}}$ by \cite[Theorem 18 and pg. 29]{2018-lnim-budzynski-jablonski-jung-stochel}. Thus 
\begin{align}\label{cisza03}
U^*UU=M_{\chi_{\{\hfw>0\}}} C_{\phi,\widetilde w}.    
\end{align}
From $|\chi_{\{\hfw>0\}}\widetilde w|^2\leqslant |\widetilde w|^2$ a.e. $[\mu]$ we deduce that $C_{\phi, \chi_{\{\hfw>0\}} \widetilde w}$ is well defined. Clearly, for any $f\in L^2(\mu)$, $M_{\chi_{\{\hfw>0\}}} C_{\phi,\widetilde w}f =M_{\chi_{\{\hfw>0\}}} C_{\phi,\widetilde w}f$. This, \eqref{cisza02} and \eqref{cisza03} imply (ii) whenever (i) is satisfied.

(ii) $\Rightarrow$ (iii) By \cite[Lemma 3.1]{2020-mn-benhida-budzynski-trepkowski} we get 
\begin{align*}
\hsf_{\phi, w_{1/2}}\circ \phi = \efw\big(\sqrt{\hfw}\big)\sqrt{\hfw\circ\phi} \text{ a.e. $[\mu_w]$}.
\end{align*}
Assuming $M_{\chi_{\{\hfw>0\}}}C_{\phi, \widetilde w}=C_{\phi, \widetilde w_{1/2}}$ we get 
\begin{align*}
\chi_{\{\hfw>0\}}\frac{w}{\sqrt{\hfw\circ \phi}} (f\circ \phi) = \frac{w}{\sqrt{\hfw\circ\phi}} \frac{\sqrt[4]{\hfw}}{\sqrt{\efw\big(\sqrt{\hfw}\big)}} (f\circ\phi),\quad f\in L^2(\mu).
\end{align*}
This in turn can only be satisfied if and only if $\frac{\sqrt{\hfw}}{\efw(\sqrt{\hfw})}=1$ a.e. $[\mu_w]$ on $\{\hfw>0\}$ or, equivalently, $\hfw>0$ a.e. $[\mu_w]$ and $\sqrt{\hfw}=\efw\big(\sqrt{\hfw}\big)$ a.e. $[\mu_w]$ (use \cite[Lemma 6 and Proposition 8(v)]{2018-lnim-budzynski-jablonski-jung-stochel}). The latter is equivalent to $\hfw=\efw\big(\hfw\big)$ a.e. $[\mu_w]$ as $\psi(t)=\sqrt{t}$ is a bijective transformation of $(0,+\infty)$. 

(iii) $\Rightarrow$ (i) By \cite[Theorem 18 and (2.10)]{2018-lnim-budzynski-jablonski-jung-stochel} we have
\begin{align*}
|\cfw| U |\cfw| U^*f=&M_{\sqrt{\hfw}}C_{\phi,\widetilde w}M_{\sqrt{\hfw}}\big( \sqrt{\hfw}\cdot \efw(f_w)\circ\phi^{-1}\big)\\
=&\sqrt{\hfw \hfw\circ \phi}\cdot w\cdot \efw(f_w),\quad f\in L^2(\mu).
\end{align*}
Assuming (iii), we have
\begin{align*}
U |\cfw| U^* |\cfw| f=&C_{\phi,\widetilde w}M_{\sqrt{\hfw}}\Big( \sqrt{\hfw}\cdot \efw\big((\sqrt{\hfw}f)_w\big)\circ\phi^{-1}\Big)\\
=&\widetilde w\cdot (\hfw\circ\phi)\cdot \big( \efw\big(\sqrt{\hfw}f_w\big)\big)\\
=&\sqrt{\hfw \hfw\circ \phi}\cdot  w\cdot 
\efw(f_w),\quad f\in L^2(\mu)
\end{align*}
Thus (iii) imply that $|\cfw|$ and $U|\cfw|U^*$ commute. This, by \cite[Theorem 1]{1972-pams-campbell}, is equivalent to (i). This completes the proof.
\end{proof}
\begin{rem}
Inspecting the proof of Theorem \ref{wcent01} one can derive other equivalent conditions for $\cfw$ to be weakly centered. Indeed, since $\psi(t)=t^\alpha$ is a bijective transformation of $(0,+\infty)$ for any $\alpha\in \rbb\setminus\{0\}$, the condition (iii) of Theorem \ref{wcent01} is equivalent to any of the following
\begin{enumerate}
\item[(iv)] $\hfw^\alpha=\efw \big(\hfw^\alpha\big)$ a.e. $[\mu_w]$ for some $\alpha\in\rbb\setminus\{0\}$,
\item[(v)] $\hfw^\alpha=\efw \big(\hfw^\alpha\big)$ a.e. $[\mu_w]$ for every $\alpha\in\rbb\setminus\{0\}$. 
\end{enumerate}
For any $\alpha\in(0,1]$, the phase $V_\alpha$ in the polar decomposition $\Delta_\alpha (\cfw)=V_\alpha |\Delta_\alpha (\cfw)|$ of the $\alpha$-Aluthge transform $\Delta_\alpha (\cfw)$ is equal to $C_{\phi, \widetilde w_\alpha}$, with 
\begin{align*}
\widetilde w_\alpha=\frac{w}{\sqrt{\hfw\circ\phi}}\frac{\sqrt{\hfw^{\alpha}}}{\sqrt{\efw\big(\hfw^\alpha\big)}},
\end{align*}
Using the above we deduce the equivalence of (v) to
\begin{enumerate}
\item[(vi)] $V_{\alpha_1}=V_{\alpha_2}$ a.e. $[\mu_w]$ for all $\alpha_1, \alpha_2\in(0,1]$. 
\end{enumerate}
\end{rem}
\begin{rem}\label{szaruga01}
Regarding the Aluthge transform and its use in the proof of Theorem \ref{wcent01} it is worth recalling that it was shown in \cite[Theorem 3.1]{2004-jot-ito-yamazaki-yanagida} that {\em an operator $T$ is weakly centred if and only if $\Delta_{1/2}(T)=U_T|\Delta_{1/2}(T)|$, where $U_T$ is the phase of $T$}. This however does not mean that $U_T$ is equal to the phase of $\Delta_{1/2}(T)$. In particular, the kernels of these two operators may differ. In another words, $\Delta_{1/2}(T)=U_T|\Delta_{1/2}(T)|$ in general is not the polar decomposition of $\Delta_{1/2}(T)$. Should this be true for $T=\cfw$, the Radon-Nikodym derivative $\hfw$ would satisfy ``$\hfw>0$ a.e. $[\mu_w]$'', which can be deduced inspecting the proof of Theorem \ref{wcent01}. 

The condition ``$\hfw>0$ a.e. $[\mu_w]$'' has been characterized in \cite[Proposition 12]{2018-lnim-budzynski-jablonski-jung-stochel}. For a densely defined $\cfw$ it is equivalent to a condition
``$\mu\big(\{\hfw=0\}\cap\{w\neq 0\}\big)=0$''. One can use this to show that ``$\hfw>0$ a.e. $[\mu_w]$'' does not follow from ``$\hfw=\efw(\hfw)$ a.e. $[\mu_w]$''. In fact, this can even be done even for composition operators. For this, let's consider $(X, \ascr,\mu)=(\nbb, 2^\nbb, \nu)$ with $\nu$ equal to the counting measure, and an injective $\phi\colon X\to X$ such that $\phi(X)\neq X$. By \cite[Proposition 79]{2018-lnim-budzynski-jablonski-jung-stochel}, $\hsf_\phi(x)=1$ for every $x\in \phi(X)$ and $\hsf_\phi(x)=0$ for every $x\in X\setminus\phi(X)$. Since $\hsf_\phi$ is constant on $\phi^{-1}(\{x\})$ for every $x\in\phi(X)$, $\hsf_\phi=\esf_\phi(\hsf_\phi)$. Clearly $\mu\big(\{\hsf_\phi=0\}\big)\neq0$.
\end{rem}
Three immediate corollaries follow.
\begin{cor}\label{debile01}
Let $C_\phi\in \ogr{L^2(\mu)}$. Then $C_\phi$ is weakly centered if and only if $\esf_{\phi}(\hsf_\phi)=\hsf_{\phi}$.
\end{cor}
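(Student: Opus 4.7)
The plan is to specialize Theorem \ref{wcent01} to the case $w\equiv 1$. Since the paper adopts the convention $C_\phi=C_{\phi,1}$, $\hsf_\phi=\hsf_{\phi,1}$, and $\esf_\phi=\esf_{\phi,1}$, it suffices to verify that the hypotheses of Theorem \ref{wcent01} are met for $w\equiv 1$ and that condition (iii) reduces to the stated equality.

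First I would observe that $|w|\equiv 1$ forces $\mu_w=\mu$ on $\ascr$, directly from the definition $\mu_w(\sigma)=\int_\sigma|w|^2\D\mu$. Consequently the absolute continuity requirement $\mu_w\circ\phi^{-1}\ll\mu$, which is built into the very existence of $C_\phi\in\ogr{L^2(\mu)}$, coincides with the assumption imposed for the general weighted operator. The Radon--Nikodym derivative $\hsf_{\phi,1}$ is then exactly the function $\hsf_\phi$ discussed in the preliminaries, and the conditional expectation $\esf_{\phi,1}(\cdot)$ relative to $\phi^{-1}(\ascr)$ and $\mu_w=\mu$ becomes $\esf_\phi(\cdot)$.

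Next I would quote Theorem \ref{wcent01}: under the boundedness assumption $C_\phi\in\ogr{L^2(\mu)}$, condition (i) (weakly centered) is equivalent to condition (iii), which here reads $\hsf_\phi=\esf_\phi(\hsf_\phi)$ a.e.\ $[\mu_w]=$ a.e.\ $[\mu]$. This is precisely the conclusion of the corollary, so nothing further is needed.

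There is no serious obstacle; the only point worth being careful about is the identification of the measure $\mu_w$ with $\mu$ when $w\equiv 1$, which is what allows the ``a.e.\ $[\mu_w]$'' qualifier in Theorem \ref{wcent01}(iii) to become the plain $\mu$-a.e.\ equality $\esf_\phi(\hsf_\phi)=\hsf_\phi$ stated in the corollary.
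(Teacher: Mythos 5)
Your proposal is correct and matches the paper's intent exactly: the paper presents this as an immediate corollary of Theorem \ref{wcent01} with no written proof, and specializing condition (iii) to $w\equiv 1$ (so that $\mu_w=\mu$ and $\hfw$, $\efw$ become $\hsf_\phi$, $\esf_\phi$) is precisely what is meant. Your extra care about identifying $\mu_w$ with $\mu$ is appropriate but raises no new issues.
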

\begin{cor}\label{sad03}
Let $\cfw\in \ogr{L^2(\mu)}$. If $\efw$ is the identity on $L^2(\mu_w)$, then $\cfw$ is weakly centered.
\end{cor}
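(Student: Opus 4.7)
The plan is to invoke Theorem \ref{wcent01}, specifically the equivalence (i)$\Leftrightarrow$(iii), and reduce the task to verifying that $\hfw=\efw(\hfw)$ a.e.\ $[\mu_w]$. Since $\cfw$ is bounded, $\hfw$ is $\mu$-essentially bounded, and the measure $\mu_w$ is $\sigma$-finite because $\mu$ is $\sigma$-finite and $\mu_w\ll\mu$ with bounded density $|w|^2$ on sets where one cares.

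First, I would unpack the hypothesis. Saying that $\efw$ is the identity on $L^2(\mu_w)$ means that the orthogonal projection $L^2(\mu_w)\to L^2(\mu_w|_{\phi^{-1}(\ascr)})$ is the identity; equivalently, every element of $L^2(\mu_w)$ admits a $\phi^{-1}(\ascr)$-measurable representative (i.e.\ for every $\sigma\in\ascr$ with $\mu_w(\sigma)<\infty$ there is $\tau\in\phi^{-1}(\ascr)$ with $\mu_w(\sigma\triangle\tau)=0$).

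Next, I would extend this property to $\hfw$. Picking an exhausting sequence $X_n\in\ascr$ with $X_n\nearrow X$ and $\mu(X_n)<\infty$ (hence $\mu_w(X_n)<\infty$), the truncations $\chi_{X_n}\hfw$ belong to $L^2(\mu_w)$ since $\hfw$ is essentially bounded. By hypothesis each such truncation coincides $\mu_w$-a.e.\ with a $\phi^{-1}(\ascr)$-measurable function. Taking a monotone $\mu_w$-a.e.\ pointwise limit, $\hfw$ itself is $\mu_w$-a.e.\ equal to a $\phi^{-1}(\ascr)$-measurable function.

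From this, the defining property of the conditional expectation immediately gives $\efw(\hfw)=\hfw$ a.e.\ $[\mu_w]$, which is condition (iii) of Theorem \ref{wcent01}, and therefore $\cfw$ is weakly centered. The only technicality is the passage from $L^2(\mu_w)$ to the (possibly not $L^2(\mu_w)$-integrable) function $\hfw$; this is dispatched by the $\sigma$-finite truncation argument above, so I would expect no substantive obstacle.
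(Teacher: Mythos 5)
Your proposal is correct and matches the paper's route: the corollary is stated there as an immediate consequence of Theorem \ref{wcent01}, since the hypothesis forces $\efw(\hfw)=\hfw$ a.e.\ $[\mu_w]$, which is condition (iii). One tiny repair to your truncation step: $\mu(X_n)<\infty$ does not by itself give $\mu_w(X_n)<\infty$ (as $|w|^2$ need not be $\mu$-integrable there), so exhaust instead by $X_n\cap\{|w|\leqslant n\}$, after which the argument goes through unchanged.
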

\begin{cor}
Let $C_\phi\in \ogr{L^2(\mu)}$. If $\cfw$ is cohyponormal (i.e., $\cfw^*$ is hyponormal), then $\cfw$ is weakly centered.
\end{cor}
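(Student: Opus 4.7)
The plan is to verify condition (iii) of Theorem \ref{wcent01}, i.e.\ $\hfw=\efw(\hfw)$ a.e.\ $[\mu_w]$, from the cohyponormality of $\cfw$.

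First, a routine duality calculation gives $\cfw^*g=\hfw\cdot\efw(g/w)\circ\phi^{-1}$. I would observe that for any $g\in L^2(\mu_w)$ with $\efw(g)=0$, setting $f:=wg\in L^2(\mu)$ yields $\cfw^*f=\hfw\cdot\efw(g)\circ\phi^{-1}=0$. Cohyponormality ($\|\cfw f\|\Le\|\cfw^*f\|$ for all $f\in L^2(\mu)$) then forces $\cfw f=0$, which means
\begin{align*}
\int_X\hfw|g|^2\D\mu_w=\|\cfw f\|^2=0,
\end{align*}
so $g=0$ a.e.\ $[\mu_w]$ on $\{\hfw>0\}$ for every $g\in L^2(\mu_w)\cap\ker\efw$.

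Second, since any $v\in L^2(\mu_w)$ satisfies $v-\efw(v)\in\ker\efw$, the first step yields $v=\efw(v)$ a.e.\ $[\mu_w]$ on $\{\hfw>0\}$. I would apply this to the truncations $v_n:=\hfw\chi_{A_n}$, where $\{A_n\}\subseteq\ascr$ increases to $X$ with $\mu_w(A_n)<\infty$ (each $v_n\in L^2(\mu_w)$, as $\hfw\in L^\infty(\mu)$), and pass to the limit via monotone convergence for conditional expectations to get $\hfw=\efw(\hfw)$ a.e.\ $[\mu_w]$ on $\{\hfw>0\}$.

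Third, to extend the equality to $\{\hfw=0\}$, I would invoke the defining identity $\int_\Delta\efw(\hfw)\D\mu_w=\int_\Delta\hfw\D\mu_w$ for $\Delta\in\phi^{-1}(\ascr)$ of finite $\mu_w$-measure. Splitting $\Delta$ along $\{\hfw>0\}$ and $\{\hfw=0\}$ and using the equality already established on $\{\hfw>0\}$ together with $\hfw\equiv 0$ on $\{\hfw=0\}$ reduces the identity to $\int_{\Delta\cap\{\hfw=0\}}\efw(\hfw)\D\mu_w=0$. Since $\efw(\hfw)\Ge 0$ and $\phi^{-1}(\ascr)$ is $\sigma$-finite with respect to $\mu_w$ (via $\phi^{-1}(A_n)$), this forces $\efw(\hfw)=0$ a.e.\ $[\mu_w]$ on $\{\hfw=0\}$. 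Hence $\hfw=\efw(\hfw)$ a.e.\ $[\mu_w]$ and Theorem \ref{wcent01} gives the claim. The main delicacy lies in this last step, as $\mu_w(\{\hfw=0\})$ need not vanish.
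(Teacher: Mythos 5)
Your argument is correct, but it takes a genuinely different route from the paper. The paper disposes of this corollary in one line by citing the known characterization of cohyponormal wco's (\cite[Theorem 60]{2018-lnim-budzynski-jablonski-jung-stochel}) and then feeding it into condition (iii) of Theorem \ref{wcent01}. You instead re-derive the operative content of that citation from scratch: from $\cfw^*(wg)=\hfw\cdot\efw(g)\circ\phi^{-1}=0$ for $g\in L^2(\mu_w)\cap\jd{\efw}$ and the inequality $\|\cfw f\|\Le\|\cfw^*f\|$ you conclude that every such $g$ vanishes a.e.\ $[\mu_w]$ on $\{\hfw>0\}$, i.e.\ that $\efw$ acts as the identity over $\{\hfw>0\}$, and then you verify (iii) of Theorem \ref{wcent01} by truncation and monotone convergence. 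All the measure-theoretic details check out: $wg\in L^2(\mu)$ for $g\in L^2(\mu_w)$, the truncations $\hfw\chi_{A_n}$ are needed precisely because $\hfw$ need not lie in $L^2(\mu_w)$, and the finiteness of $\int_\varDelta\hfw\D\mu_w$ and $\int_\varDelta\efw(\hfw)\D\mu_w$ (used to justify the subtraction in your third step) follows from $\hfw\in L^\infty(\mu)$ and $\mu_w(\varDelta)<\infty$. You are also right to flag the extension to $\{\hfw=0\}$ as the delicate point: as Remark \ref{szaruga01} shows, $\mu_w(\{\hfw=0\})$ can indeed be positive, and your $\sigma$-finiteness argument over $\phi^{-1}(\ascr)$ (via $\phi^{-1}(X_n)$ with $\mu(X_n)<\infty$) closes that gap correctly. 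What the paper's route buys is brevity; what yours buys is a self-contained proof that makes transparent exactly which consequence of cohyponormality is being used, and as a by-product it reproves the relevant half of the cited theorem.
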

\begin{proof}
Use \cite[Theorem 60]{2018-lnim-budzynski-jablonski-jung-stochel}.
\end{proof}
The phase of a weakly centered wco has to be quasinormal.
\begin{pro}\label{black01}
Assume that $\cfw\in\ogr{L^2(\mu)}$ is weakly centered. Then $C_{\phi, \widetilde w}$ is quasinormal, i.e., $C_{\phi, \widetilde w}^*C_{\phi, \widetilde w}C_{\phi, \widetilde w}=C_{\phi, \widetilde w}C_{\phi, \widetilde w}^*C_{\phi, \widetilde w}$.
\end{pro}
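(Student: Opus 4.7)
\emph{Approach.} The plan is to reduce quasinormality of $U := C_{\phi,\widetilde w}$ to a single measurable identity and then extract that identity from Theorem~\ref{wcent01}.

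\emph{Reduction to a functional identity.} Being the phase of a bounded operator, $U$ is a partial isometry; in particular $UU^*U = U$, so the quasinormality relation $U^*UU = UU^*U$ reduces to $U^*U\cdot U = U$. The proof of Theorem~\ref{wcent01} records that $U^*U = M_{\chi_{\{\hfw>0\}}}$, whence the desired identity becomes $C_{\phi,\chi_{\{\hfw>0\}}\widetilde w} = C_{\phi,\widetilde w}$, equivalently $\widetilde w = 0$ a.e.\ $[\mu]$ on $\{\hfw = 0\}$. Since $\widetilde w = w/\sqrt{\hfw\circ\phi}$ already vanishes wherever $\hfw\circ\phi = 0$, this is in turn equivalent to
\[
\mu_w\bigl(\{\hfw = 0\}\cap\{\hfw\circ\phi > 0\}\bigr) = 0.
\]

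\emph{Using the hypothesis.} By Theorem~\ref{wcent01}(iii), $\hfw = \efw(\hfw)$ a.e.\ $[\mu_w]$, and condition (v) of the remark following it gives $\hfw^\alpha = \efw(\hfw^\alpha)$ a.e.\ $[\mu_w]$ for every $\alpha > 0$. Since $\hfw\in L^\infty(\mu)$ (because $\cfw$ is bounded), dominated convergence as $\alpha\to 0^+$ yields $\chi_{\{\hfw>0\}} = \efw(\chi_{\{\hfw>0\}})$ a.e.\ $[\mu_w]$. Representing $\efw(\chi_{\{\hfw>0\}}) = \chi_A\circ\phi$ for some $A\in\ascr$ (with $A\subseteq\{\hfw>0\}$ up to $\mu$-null sets, via the normalization of $\efw(\cdot)\circ\phi^{-1}$), one gets $\{\hfw>0\}\cap\{w\neq 0\} = \phi^{-1}(A)\cap\{w\neq 0\}$ a.e.\ $[\mu]$. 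A set-theoretic chase through the preimages $\phi^{-1}(A)$ and $\phi^{-2}(A)$ then yields the required $\mu_w$-nullity.

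\emph{Main obstacle.} The crux is the final measure-theoretic bookkeeping: translating the $\phi^{-1}(\ascr)$-measurability of $\chi_{\{\hfw>0\}}$ (mod $\mu_w$) into the desired $\mu_w$-nullity of $\{\hfw=0\}\cap\{\hfw\circ\phi>0\}$. Particular care is needed to track the interaction of $\phi$-pullbacks with the restriction to $\{w\neq 0\}$, since the identification $\{\hfw>0\} = \phi^{-1}(A)$ is valid only up to $\mu_w$-null sets (rather than $\mu$-null sets), and one must ensure that no positive-$\mu_w$ portion of $\{w\neq 0\}\cap\{\hfw\circ\phi > 0\}$ survives on $\{\hfw=0\}$.
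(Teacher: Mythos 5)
Your reduction is correct, and it is in fact more direct than the paper's route (the paper goes through the polar decomposition of the Aluthge transform $\Delta_{1/2}(C_{\phi,\widetilde w})$). Since $U=C_{\phi,\widetilde w}$ is a partial isometry with $U^*U=M_{\chi_{\{\hfw>0\}}}$, quasinormality of $U$ is indeed equivalent to $\chi_{\{\hfw=0\}}\widetilde w=0$ a.e.\ $[\mu]$, i.e.\ to $\mu_w\bigl(\{\hfw=0\}\cap\{\hfw\circ\phi>0\}\bigr)=0$; and since $\mu_w(\{\hfw\circ\phi=0\})=\int\chi_{\{\hfw=0\}}\hfw\,\D\mu=0$ always, this is precisely the condition ``$\hfw>0$ a.e.\ $[\mu_w]$'', which is also the key identity \eqref{black04} that the paper's proof aims at. Your limit $\alpha\to0^+$ giving $\chi_{\{\hfw>0\}}=\efw(\chi_{\{\hfw>0\}})$ a.e.\ $[\mu_w]$ is likewise sound.

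The step you flag as the ``main obstacle'' is, however, not closable: $\phi^{-1}(\ascr)$-measurability of $\chi_{\{\hfw>0\}}$ modulo $\mu_w$ does not force $\mu_w(\{\hfw=0\})=0$. The paper's own Remark \ref{szaruga01} exhibits exactly this failure: for an injective, non-surjective $\phi$ on $(\nbb,2^{\nbb},\text{counting measure})$ one has $\esf_\phi=\mathrm{id}$, hence $\hsf_\phi=\esf_\phi(\hsf_\phi)$, while $\mu(\{\hsf_\phi=0\})=\mu(\nbb\setminus\phi(\nbb))>0$. Concretely, $\phi(n)=n+1$ makes $C_\phi$ the backward shift: it is weakly centered ($C_\phi^*C_\phi=I-P_{e_1}$ commutes with $C_\phi C_\phi^*=I$), it coincides with its own phase (here $\widetilde w\equiv1$), and it is not quasinormal, since $(C_\phi^*C_\phi)C_\phi e_2=0\neq e_1=C_\phi(C_\phi^*C_\phi)e_2$. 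So no set-theoretic chase will produce the required nullity; your reduction actually shows that the Proposition is \emph{equivalent} to the extra condition ``$\hfw>0$ a.e.\ $[\mu_w]$'', which Remark \ref{szaruga01} shows is independent of weak centeredness. (For comparison: the paper's proof obtains \eqref{black04} by simplifying $\sqrt[4]{\hfw}\big/\sqrt{\efw(\sqrt{\hfw})}$ to $1$ in \eqref{black03}, whereas under the convention $0/0=0$ it equals $\chi_{\{\hfw>0\}}$, so \eqref{black02} and \eqref{black03} describe the same operator and the comparison yields no information.) Your attempt therefore cannot be completed as written, but the honest reduction it contains is valuable: it isolates the exact additional hypothesis under which the conclusion does hold.
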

\begin{proof}
In view of \cite[pg. 29]{2018-lnim-budzynski-jablonski-jung-stochel} and \cite[Lemma 6]{2018-lnim-budzynski-jablonski-jung-stochel}, we have $\hsf_{\phi,\widetilde w}=\chi_{\{\hfw>0\}}$ a.e. $[\mu]$ and 
\begin{align}\label{cisza01}
\hsf_{\phi,\widetilde w}\circ \phi=\chi_{\{\hfw\circ\phi>0\}}=1 \quad \text{a.e. $[\mu_w]$}.
\end{align}
Hence 
\begin{align*}
w_{\widetilde{1/2}}:=\widetilde w \sqrt[4]{\frac{\hsf_{\phi,\widetilde w}}{\hsf_{\phi,\widetilde w}\circ \phi}}=\chi_{\{\hfw>0\}} w\frac{1}{\sqrt{\hfw\circ\phi}}= \chi_{\{\hfw>0\}}\widetilde w \quad \text{a.e. $[\mu_w]$},
\end{align*}
and thus
\begin{align}\label{black02}
\Delta_{1/2}(C_{\phi, \widetilde w})f=C_{\phi ,w_{\widetilde{1/2}}}f=\chi_{\{\hfw>0\}} \widetilde w\, (f\circ\phi),\quad f\in L^2(\mu).
\end{align}
By Theorem \ref{wcent01}, $\sqrt{\hfw}=\efw\big(\sqrt{\hfw}\big)$ a.e. $[\mu_w]$. Therefore, the phase of $\Delta_{1/2}(\cfw)$ is given by
\begin{align}\label{black03}
C_{\phi, \widetilde w_{1/2}} f=\frac{w}{\sqrt{\hfw\circ\phi}} \frac{\sqrt[4]{\hfw}}{\sqrt{\efw\big(\sqrt{\hfw}\big)}} (f\circ\phi)=w(f\circ\phi),\quad f\in L^2(\mu).
\end{align}
According to \cite[Theorem 3.1]{2004-ieot-i-y-y}, $\Delta_{1/2}(\cfw)=\Delta_{1/2}(C_{\phi, \widetilde w})|\Delta_{1/2}(\cfw)|$ is the polar decomposition, yielding $\Delta_{1/2}(C_{\phi, \widetilde w})=C_{\phi,\widetilde w_{1/2}}$.
Hence, comparing \eqref{black02} and \eqref{black03}, we get
\begin{align}\label{black04}
\chi_{\{\hfw>0\}}=1\quad \text{a.e $[\mu_w]$}. 
\end{align}
Since $\mu_w$ and $\mu_{\widetilde w}$ are mutually absolutely continuous, \eqref{cisza01} and \eqref{black04} imply
\begin{align*}
\hsf_{\phi,\widetilde w}=\hsf_{\phi,\widetilde w}\circ \phi\quad \text{a.e. $[\mu_{\widetilde w}]$}.
\end{align*}
This and \cite[Theorem 20]{2018-lnim-budzynski-jablonski-jung-stochel} means that $C_{\phi, \widetilde w}$ is quasinormal.
\end{proof}
\begin{rem}\label{blackblack++}
Quasinormality of $C_{\phi,\widetilde w}$ does not imply that $\cfw$ is weakly centered -- any non weakly centered $\cfw$ such that $\hfw>0$ a.e. $[\mu]$ serves as a counter example and there is an abundance of such $\cfw$'s, (a concrete one is provided in Example \ref{blackblack}). However, if one assumes more, namely that $C_{\phi,\widetilde w}$ is unitary and $\cfw$ is a weighted shift on a directed tree, then $\cfw$ is centered (an essentially stronger property) as shown in Proposition \ref{orange}. In fact, this holds true regardless of whether the operator in question is a weighted shift on a directed tree or not, which was shown in \cite[Proposition 3.4]{1995-jfa-paulsen-pearcy-petrovic}, but in a context of weighted shifts one can obtain more specific information.
\end{rem}
\begin{rem}
It is worth pointing out that the quasinormality of $C_{\phi,\widetilde w}$ can also be deduced from the condition ``$\hfw>0$ a.e. $[\mu_w]$'' (mentioned in Remark \ref{szaruga01}) without assuming that $\cfw$ is weakly centered.
\end{rem}
Below we explicate a wide class of weakly centered composition and weighted composition operators, induced by linear transformations of $\rbb^n$.
\begin{exa}\label{sad04}
Let $n\in \nbb$. Let $X=\rbb^n$ and $\ascr=\borel{\rbb^n}$. Let $\rho(z) = \sum_{k=0}^\infty a_k z^k$, $z \in \cbb$, be an entire function such that $a_n$ is non-negative for every $k\in\zbb_+$ and $a_{k_0} > 0$ for some $k_0 \Ge 1$. Let $\mu=\mu^\rho$ be the $\sigma$-finite measure on $\ascr$ defined by
\begin{align*}
\mu^\rho(\sigma) = \int_\sigma \rho\big(\|x\|^2\big) \D\M_n(x),\quad \sigma\in\borel{\rbb^n},
\end{align*}
where $\M_n$ is the $n$-dimensional Lebesgue measure on $\rbb^n$ and $\|\cdot\|$ is a norm on $\rbb^n$ induced by an inner product. Finally, let $\phi\colon \rbb^n\to\rbb^n$ be invertible and linear. Then $\phi$ induces a composition operator $C_\phi$ in $L^2(\mu)$. The boundedness of $C_\phi$ has been fully characterized in \cite[Proposition 2.2]{sto-1990-hmj}: \emph{if $\rho$ is a polynomial, then $C_\phi$ is bounded; if $\rho$ is not a polynomial, then $C_\phi$ is bounded if and only if  $\|\phi^{-1}\|\Le 1$}. Therefore, if $\rho$ is polynomial and/or $\|\phi^{-1}\|\Le 1$, we have $C_\phi\in \ogr{L^2(\mu)}$. Since $\phi$ is an invertible transformation of $\rbb^n$, $\esf_\phi$ acts as an identity on $L^2(\mu)$ and thus, by Corollary \ref{sad03}, $C_\phi$ is weakly centered. Moreover, under the same assumptions on $\phi$ and $\rho$, if $w\colon \rbb^n\to \cbb$ is essentially bounded, $\cfw\in\ogr{L^2(\mu)}$ (use \cite[Proposition 111]{2018-lnim-budzynski-jablonski-jung-stochel}) and, by Corollary \ref{sad03} again, $\cfw$ is weakly centered too.
\end{exa}
In the reminder of the section we focus mostly on weighted shifts on directed trees. Necessary background information on these operators in the bounded case is provided below (for more we refer the interested reader to an excellent monograph \cite{2012-mams-j-j-s}). 

Let $\tcal=(V,E)$ be a directed tree ($V$ and $E$ stand for the sets of vertices and edges of $\tcal$, respectively). Set $\dzi u = \{v\in V\colon (u,v)\in E\}$ for $u \in V$. Denote by $\paa$ the partial function from $V$ to $V$ which assigns to a vertex $u\in V$ its parent $\pa{u}$ (i.e.\ a unique $v \in V$ such that $(v,u)\in E$). A vertex $u \in V$ is called a {\em root} of $\tcal$ if $u$ has no parent. A root is unique (provided it exists); we denote it by $\koo$. Set $V^\circ=V \setminus \{\koo\}$ if $\tcal$ has a root and $V^\circ=V$ otherwise. We say that $u \in V$ is a {\em branching vertex} of $V$, and write $u \in V_{\prec}$, if $\dzi{u}$ consists of at least two vertices. 

Assume $\lambdab=\{\lambda_v\}_{v \in V^{\circ}} \subseteq \cbb$ satisfies $\sup_{v\in V}\sum_{u\in\dzii{v}}|\lambda_u|^2<\infty$. Then the following formula
\begin{align*}
(\slam f)(v)=
   \begin{cases}
\lambda_v \cdot f\big(\pa v\big) & \text{ if } v\in V^\circ,
   \\
0 & \text{ if } v=\koo,
   \end{cases}
\end{align*}
defines a bounded operator on $\ell^2(V)$ (as usual, $\ell^2(V)$ is the Hilbert space of square summable complex functions on $V$ with standard inner product). We call it a {\em weighted shift on} $\tcal$ with weights $\lambdab$.
\begin{pro}\label{suka01}
Let $\slam\in\ogr{\ell^2(V)}$. Then $\slam$ is weakly centered if and only if for every $v\in V$ and every $u_1, u_2\in \dzi{v}$ such that $\lambda_{u_1}\lambda_{u_2}\neq 0$ the equality
$$\sum_{y\in\dzi{u_1}}|\lambda_{y}|^2=\sum_{y\in\dzi{u_2}}|\lambda_{y}|^2$$
holds.
\end{pro}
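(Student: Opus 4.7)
The plan is to cast $\slam$ as a weighted composition operator and apply the equivalence (i)$\Leftrightarrow$(iii) of Theorem \ref{wcent01}. Take $X=V$, $\ascr=2^V$, and $\mu$ the counting measure. Define $\phi=\paa$ on $V^\circ$ and extend it to a self-map of $V$ (for instance with $\phi(\koo)=\koo$ if a root exists), and set $w(v)=\lambda_v$ for $v\in V^\circ$, $w(\koo)=0$. Then $\mu_w\circ\phi^{-1}\ll\mu$ holds trivially, $\cfw=\slam\in\ogr{L^2(\mu)}$, and $\phi^{-1}(\{v\})=\dzi{v}$ for every $v\in V$ (the root, which carries no $\mu_w$-mass, contributes nothing). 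Consequently
$$\hfw(v)=\mu_w(\phi^{-1}(\{v\}))=\sum_{u\in \dzi{v}}|\lambda_u|^2,\quad v\in V.$$

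Next I would identify the conditional expectation $\efw$. The $\sigma$-algebra $\phi^{-1}(\ascr)$ is generated by the fibers $\dzi{v}$, so a $\phi^{-1}(\ascr)$-measurable function is precisely one that is constant on every $\dzi{v}$. Applying the defining identity of $\efw$ to $\varDelta=\phi^{-1}(\{v\})$ pins it down: for every $f\in L^1(\mu_w)$ and every $u\in V^\circ$,
$$\efw(f)(u)=\frac{\sum_{y\in \dzi{\pa u}}f(y)|\lambda_y|^2}{\sum_{y\in \dzi{\pa u}}|\lambda_y|^2}$$
whenever the denominator is nonzero; at points where the denominator vanishes, $u$ lies outside the $\mu_w$-support and the value is irrelevant.

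By Theorem \ref{wcent01}, $\slam$ is weakly centered iff $\hfw=\efw(\hfw)$ a.e.\ $[\mu_w]$. Since $\mu_w(\{u\})=|\lambda_u|^2$, this must be checked precisely at those $u\in V$ with $\lambda_u\neq 0$. For such a $u$ the required equality, with $v=\pa u$, reads
$$\hfw(u)=\frac{\sum_{u'\in \dzi{v}}\hfw(u')|\lambda_{u'}|^2}{\sum_{u'\in \dzi{v}}|\lambda_{u'}|^2},$$
i.e.\ each $\hfw(u)$ with $u\in\dzi{v}$, $\lambda_u\neq 0$, equals a convex combination of the $\hfw(u')$'s supported on $\{u'\in\dzi{v}:\lambda_{u'}\neq 0\}$. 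A standard argument shows this occurs simultaneously for all such $u$ if and only if the values $\hfw(u_1),\hfw(u_2)$ coincide whenever $u_1,u_2\in\dzi{v}$ and $\lambda_{u_1}\lambda_{u_2}\neq 0$; unpacking $\hfw$ yields exactly the stated equality of sums $\sum_{y\in\dzi{u_1}}|\lambda_y|^2=\sum_{y\in\dzi{u_2}}|\lambda_y|^2$. The only nontrivial ingredient is the explicit fiberwise formula for $\efw$; once that is in hand the remainder is straightforward bookkeeping.
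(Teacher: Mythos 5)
Your proposal is correct and follows essentially the same route as the paper: realize $\slam$ as $\cfw$ on the counting-measure space $(V,2^V)$ with $\phi$ extending $\paa$ and $w=\lambda$, compute $\hfw(v)=\sum_{y\in\dzi{v}}|\lambda_y|^2$, and reduce via Theorem \ref{wcent01}(iii) to constancy of $\hfw$ on the fibers $\phi^{-1}(\{v\})$ at the atoms of $\mu_w$. The only cosmetic difference is that you write out $\efw$ explicitly as a weighted average and argue via the convex-combination identity, whereas the paper invokes $\phi^{-1}(\ascr)$-measurability directly; both amount to the same fact.
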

\begin{proof}
Set $X=V$ and $\ascr=2^V$. Let $\mu$ be the counting measure on $X$. Let $\phi\colon X\to X$ be any $\ascr$-measurable extension of the $\paa\colon V\to V$ partial function. Define $w\colon X\to \cbb$ by
\begin{align*}
w(x)=
   \begin{cases}
\lambda_x & \text{ if } x\in V^\circ,
   \\
0 & \text{ if } v=\koo.
   \end{cases}
\end{align*}
It is easy to see that $\slam$ and $\cfw$ coincide and thus, in view of Theorem \ref{wcent01},  $\slam$ is weakly centered if and only if $\efw(\hfw)=\hfw$ a.e. $[\mu_w]$, in another words $\hfw$ is $\phi^{-1}(\ascr)$-measurable $\mu_w$-almost everywhere. Since $(X,\ascr, \mu)$ is a discrete measure space, any $f\colon X\to \rbb_+$ is $\phi^{-1}(\ascr)$-measurable if and only if $f$ is constant on sets $\phi^{-1}(x)$, $x\in X$. By \cite[Proposition 79]{2018-lnim-budzynski-jablonski-jung-stochel}, we have
\begin{align*}
\hfw(x)={\sum_{y\in\phi^{-1}(\{x\})}|w(y)|^2}=\sum_{y\in\dzi{x}}|\lambda_y|^2,\quad x\in X.
\end{align*}
Since we care only for $x$'s that are atoms of $\mu_w$, we compare values of $\hfw$ at vertices with positive weights attached. This yields the claim.
\end{proof}
Recall that all classical weighted shifts are weakly centered (in fact, they are centered). With the above result at hand one can show that the simplest possible directed tree not related to classical weighted shifts -- a rootless directed tree with one branching vertex of valency 2 - admits a non weakly centered weighted shift.
\begin{exa}\label{blackblack}
Let $\tcal = (V,{E})$ be the directed tree with
\begin{align*}
    V = \big\{-k\colon k\in\zbb_+\big\} \cup \big\{ (i,j) \colon  i\in\{1,2\},\ j \in \nbb \big\}
\end{align*}
and $E$ as in the Figure \ref{drzewot201}. (This directed tree was denoted in \cite{2012-mams-j-j-s} as $\tcal_{2,\infty}$.)
\begin{figure}[ht]
\begin{center}
\begin{tikzpicture}[scale=0.8, transform shape,edge from parent/.style={draw,to}]
\tikzstyle{every node} = [circle,fill=gray!30]
\node[fill=none] (e-3) at (-6,0) {};
\node (e-2)[font=\footnotesize, inner sep = 1pt] at (-4,0) {$-2$};
\node (e-1)[font=\footnotesize, inner sep = 1pt] at (-2,0) {$-1$};

\node (e10)[font=\footnotesize, inner sep = 3pt] at (0,0) {$0$};

\node (e11)[font=\footnotesize, inner sep = 1pt] at (2,1) {$(1,1)$};
\node (e12)[font=\footnotesize, inner sep = 1pt] at (4,1) {$(1,2)$};
\node (e13)[font=\footnotesize, inner sep = 1pt] at (6,1) {$(1,3)$};
\node[fill = none] (e1n) at(8,1) {};

\node (f11)[font=\footnotesize, inner sep = 1pt] at (2,-1) {$(2,1)$};
\node (f12)[font=\footnotesize, inner sep = 1pt] at (4,-1) {$(2,2)$};
\node (f13)[font=\footnotesize, inner sep = 1pt] at (6,-1) {$(2,3)$};
\node[fill = none] (f1n) at(8,-1) {};

\draw[dashed,->] (e-3) --(e-2) node[pos=0.5,above = 0pt,fill=none] {$1$};
\draw[->] (e-2) --(e-1) node[pos=0.5,above = 0pt,fill=none] {$1$};
\draw[->] (e-1) --(e10) node[pos=0.5,above = 0pt,fill=none] {$1$};
\draw[->=stealth] (e10) --(e11) node[pos=0.5,above = 0pt,fill=none] {$1$};
\draw[->] (e11) --(e12) node[pos=0.5,above = 0pt,fill=none] {$1$};
\draw[->] (e12) --(e13) node[pos=0.5,above = 0pt,fill=none] {$1$};
\draw[dashed, ->] (e13)--(e1n);
\draw[->] (e10) --(f11) node[pos=0.5,below = 0pt,fill=none] {$1$};
\draw[->] (f11) --(f12) node[pos=0.5,below = 0pt,fill=none] {$\alpha$};
\draw[->] (f12) --(f13) node[pos=0.5,below = 0pt,fill=none] {$1$};
\draw[dashed, ->] (f13)--(f1n);
\end{tikzpicture}
\end{center}
\caption{\label{drzewot201}}
\end{figure}
Set $\lambda_{(2,2)}=\alpha$ with some $\alpha\in\cbb$ and $\lambda_u=1$ for all $u\in V^\circ\setminus\{(2,2)\}$. Defined in such a way $\lambdab=\{\lambda_v\}_{v\in{V}}$ induces a $\slam\in\ogr{\ell^2(V)}$. Clearly, for all $V\setminus\{0\}$, $\card{\dzi{v}}=1$ and $\sum_{y\in\dzi{u}}|\lambda_{y}|^2=1$ for $u\in\dzi{v}$. Also, $\dzi{0}=\{(1,1), (2,1)\}$, $\sum_{y\in\dzi{(1,1))}}|\lambda_{y}|^2=1$, and $\sum_{y\in\dzi{(2,1)}}|\lambda_{y}|^2=\alpha$. Thus, by Proposition \ref{suka01}, $\slam$ is weakly centered if and only if $\alpha=1$. Observe, that changing all the other weights $\lambda_v$, $v\in V\setminus\{(1,2), (2,2)\}$ does not affect this.
\end{exa}
\begin{rem}
As easily seen, $\slam$ from Example \ref{blackblack} coincide with a weighted composition operator $\cfw$ with $\phi$ and $w$ induced by $\paa$ and $\lambdab$, respectively. Hence, $\cfw$ is not weakly centered  whenever $\alpha\neq 1$. On the other hand, the (non-weighted) composition operator $C_\phi$ is weakly-entered. Indeed, in view of \cite[Proposition 79]{2018-lnim-budzynski-jablonski-jung-stochel}, we have 
\begin{align}\label{public01}
\hsf_\phi(v)=\card{\dzi{v}},\quad v\in V
\end{align}
which means that $\hsf_\phi$ is constant on all the sets of the form $\phi^{-1}(\{x\})$. $x\in V$. This implies that $\esf_\phi(\hsf_\phi)=\hsf_\phi$ and thus $C_\phi$ is weakly centered by Corollary \ref{debile01}. The reverse situation is also possible, as shown below.
\end{rem}
\begin{exa}\label{blackblack+}
Let $\tcal = (V,E)$ be the directed tree with $$V=\{-k\colon k\in\zbb_+\}\cup\{1, (3,2)\}\cup \big\{1,n), (2,n)\colon n\in\nbb \big\}$$ and $E$ as in as in Figure \ref{truffaz-fig}.\allowdisplaybreaks
\begin{figure}[ht]
\begin{center}
\begin{tikzpicture}[scale=0.8, transform shape,edge from parent/.style={draw,to}]
\tikzstyle{every node} = [circle,fill=gray!30]
\node[fill=none] (e-3) at (-6,0) {};
\node (e-2)[font=\footnotesize, inner sep = 1pt] at (-4,0) {$-1$};
\node (e-1)[font=\footnotesize, inner sep = 3pt] at (-2,0) {$0$};
\node (e10)[font=\footnotesize, inner sep = 3pt] at (0,0) {$1$};

\node (e11)[font=\footnotesize, inner sep = 1pt] at (5,1.5) {$(1,2)$};
\node (e12)[font=\footnotesize, inner sep = 1pt] at (7,1.5) {$(1,3)$};
\node[fill = none] (e1n) at(9,1.5) {};

\node (g11)[font=\footnotesize, inner sep = 1pt] at (2,1.5) {$(1,1)$};

\node (f11)[font=\footnotesize, inner sep = 1pt] at (2,-1.5) {$(2,1)$};
\node (f12)[font=\footnotesize, inner sep = 1pt] at (5,-1.5) {$(2,2)$};
\node (f14)[font=\footnotesize, inner sep = 1pt] at (7,-1.5) {$(2,3)$};
\node[fill = none] (f1n) at(9,-1.5) {};

\node (f13)[font=\footnotesize, inner sep = 1pt] at (5,0) {$(3,2)$};

\draw[dashed,->] (e-3) --(e-2) node[pos=0.5,above = 0pt,fill=none] {$1$};
\draw[->] (e-2) --(e-1) node[pos=0.5,above = 0pt,fill=none] {$1$};
\draw[->] (e-1) --(e10) node[pos=0.5,above = 0pt,fill=none] {$1$};
\draw[->=stealth] (g11) --(e11) node[pos=0.5,above = 0pt,fill=none] {$1$};
\draw[->=stealth] (e10) --(g11) node[pos=0.5,above = 0pt,fill=none] {$1$};

\draw[->] (e10) --(f11) node[pos=0.5,above = 0pt,fill=none] {$1$};
\draw[->] (e11) --(e12) node[pos=0.5,above = 0pt,fill=none] {$1$};
\draw[->] (f11) --(f13) node[pos=0.5,above = 0pt,fill=none] {$0$};
\draw[->] (f11) --(f12) node[pos=0.5,below = 0pt,fill=none] {$1$};
\draw[->] (f12) --(f14) node[pos=0.5,above = 0pt,fill=none] {$1$};
\draw[dashed, ->] (f14)--(f1n);
\draw[dashed, ->] (e12)--(e1n);
\end{tikzpicture}
\end{center}
\caption{\label{truffaz-fig}}
\end{figure} 
Let $\lambdab=\{\lambda_v\}_{v\in{V}}$ satisfy  $\lambda_v=1$ for $v\in V\setminus\{(3,2)\}$ and $\lambda_{(3,2)}=0$. It follows from Proposition \ref{suka01} that $\slam$ is weakly centered. Clearly, $\slam\in\ogr{\ell^2(V)}$ coincide with a weighted composition operator $\cfw$ with $\phi$ and $w$ induced by $\paa$ and $\lambdab$, respectively. Hence, $\cfw$ is weakly centered. By \eqref{public01}, $\hsf_\phi(1,1)=1\neq 2=\hsf_\phi(2,1)$. Thus $C_\phi$ is not weakly centered. 
\end{exa}
\begin{rem}\label{boi01}
One can easily deduce from \cite[Proposition 79]{2018-lnim-budzynski-jablonski-jung-stochel} a characterization of weakly centered wco's on discrete measure spaces. Recall that a measure space $(X, \ascr, \mu)$ is {\em discrete} if $\ascr=2^X$, $\card{\mathsf{At}(\mu)}\leqslant \aleph_0$, $\nu(X\setminus \mathsf{At}(\mu))=0$ and $\mu(\{x\})<\infty$ for all $x\in X$, where $\mathsf{At}(\mu)=\{x\in X\colon \mu(\{x\})>0\}$. In this setting we have $\hfw(x)=\frac{\mu_w(\phi^{-1}(\{x\}))}{\mu(\{x\})}$, $x\in \mathsf{At}(\mu)$. Hence, $\cfw$ is weakly centered, if and only if
\begin{enumerate}
\item[$(\dagger)$] for every $z\in X$ and all $x_1, x_2\in \phi^{-1}(\{z\})$ such that $\mu_w(\{x_1\})\mu_w(\{x_2\})\neq 0$, there is $$\frac{\mu_w(\phi^{-1}(\{x_1\}))}{\mu(\{x_1\})}=\frac{\mu_w(\phi^{-1}(\{x_2\}))}{\mu(\{x_2\})}.$$
\end{enumerate}
In particular, if $\mu$ is a counting measure and $w=1$, then $C_\phi$ is weakly centered if and only if 
\begin{align*}
\text{$\card{\phi^{-1}(\{x_1\})}=\card{\phi^{-1}(\{x_2\})}$ for all $x_1, x_2\in \phi^{-1}(\{z\})$ and $z\in X$}
\end{align*}
(in the directed tree setting: $\card{\dzi{x_1}}=\card{\dzi{x_2}}$ whenever $\pa{x_1}=\pa{x_2}$; cf. Example \ref{blackblack+}).
\end{rem}
The proposition below is a follow up to Proposition \ref{black01} and Remark \ref{blackblack++}. Also, it should be compared to \cite[Lemma 3.3]{2021-lma-holleman-mcclatchy-thompson} which states that {\em an operator $T$ with the polar decomposition $T=U|T|$, where $U$ is unitary, is weakly centered if and only if $|T|$ commutes with $|\Delta_1(T)|$}. Note that if $T=\cfw$, then $|T|$ and $|\Delta_1(T)|$ commute as both are multiplication operators.
\begin{pro}\label{orange}
Assume $\slam\in\ogr{\ell^2(V)}$. Let $\slam=U|\slam|$ be the polar decomposition. If $U$ is unitary, then $\slam$ is unitarily equivalent to a classical two-sided weighted shift. In particular, $\slam$ is centered, i.e. the set $\{T^{*n}T^{n},\ T^{m}T^{*m}\colon n,m\in\zbb_+\}$ is commutative.
\end{pro}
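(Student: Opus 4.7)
The plan is to extract strong structural information from the hypothesis that $U$ is unitary: $\tcal$ will turn out to be isomorphic to the $\zbb$-chain with all weights nonzero, and $\slam$ will be recognized as a classical bilateral weighted shift.

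Since $\slam=U|\slam|$ is the polar decomposition, $\jd{U}=\jd{\slam}$ and $\overline{\mathcal{R}(U)}=\overline{\mathcal{R}(\slam)}$, so the unitarity of $U$ is equivalent to $\jd{\slam}=\{0\}$ and $\jd{\slam^*}=\{0\}$. Using the formulas
\begin{align*}
(\slam f)(v)=\lambda_v f(\pa{v}),\quad v\in V^\circ,\qquad (\slam^* f)(u)=\sum_{v\in\dzi{u}}\overline{\lambda_v}\,f(v),\quad u\in V,
\end{align*}
I would translate these into conditions on $(\tcal,\lambdab)$. The vanishing of $\jd{\slam}$ forces every $u\in V$ to have at least one child $v\in\dzi{u}$ with $\lambda_v\neq 0$. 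The vanishing of $\jd{\slam^*}$ gives three further things: (i) $\tcal$ is rootless, since otherwise $e_{\koo}\in\jd{\slam^*}$; (ii) no $u\in V$ has $\card{\dzi{u}}\Ge 2$, because at a branching vertex a single scalar linear equation on $\dzi{u}$ admits a nonzero finitely supported solution (splitting into the subcases where zero, one, or two of the relevant weights vanish); and (iii) if $\dzi{u}=\{v\}$, then $\lambda_v\neq 0$, for otherwise $e_v\in\jd{\slam^*}$. Combining with the previous, every vertex of $\tcal$ has a parent and exactly one child, and all weights are nonzero.

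Next I would fix an arbitrary $v_0\in V$ and inductively define $v_n$ ($n\in\zbb$) by letting $v_{n+1}$ be the unique child of $v_n$ and $v_{n-1}:=\pa{v_n}$. The set $\{v_n:n\in\zbb\}$ is then closed under taking parents and children, and connectedness of $\tcal$ forces $V=\{v_n:n\in\zbb\}$. The unitary $W\colon\ell^2(V)\to\ell^2(\zbb)$ determined by $We_{v_n}=e_n$ intertwines $\slam$ with the classical bilateral weighted shift $S$ on $\ell^2(\zbb)$ having the nonzero weights $\alpha_n:=\lambda_{v_{n+1}}$. For such $S$, each operator $S^{*n}S^n$ and $S^m S^{*m}$ acts diagonally in the standard basis of $\ell^2(\zbb)$, so the family $\{S^{*n}S^n,\,S^m S^{*m}:n,m\in\zbb_+\}$ consists of pairwise commuting multiplication operators; the centeredness of $\slam$ follows.

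The main technical obstacle is the branching subcase in the second step: given a putative vertex $u$ with $\card{\dzi{u}}\Ge 2$, one must construct a candidate kernel element $f$ for $\slam^*$ supported on $\dzi{u}$ and then verify, using disjointness of the sets $\dzi{u'}$ in a tree, that $\sum_{v\in\dzi{u'}}\overline{\lambda_v}\,f(v)=0$ holds globally for every $u'\in V$, not only at $u'=u$; the three sub-subcases distinguished by how many of the selected weights vanish all have to be handled separately.
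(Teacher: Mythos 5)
Your argument is correct, but it takes a more elementary and self-contained route than the paper. The paper's proof is essentially a chain of citations to the memoir of Jab{\l}o\'nski, Jung and Stochel: the phase $U$ of $\slam$ is itself a weighted shift $S_{\widetilde\lambdab}$ on the same tree (their Proposition 3.5.1), unitarity forces all the weights $\widetilde\lambdab$ to be nonzero, and then their Lemma 8.1.5 and Proposition 8.1.6 identify $\tcal$ with $\zbb$ and $\slam$ with a bilateral weighted shift. You bypass the structure of the phase entirely: you translate unitarity of $U$ into $\jd{\slam}=\{0\}$ and $\jd{\slam^*}=\{0\}$ and read off the tree structure directly from the kernel equations --- rootlessness from $e_{\koo}\in\jd{\slam^*}$, absence of branching vertices from the existence of a nonzero finitely supported solution of a single homogeneous linear equation on $\dzi{u}$ (here the disjointness of the sets $\dzi{u'}$ is exactly what makes the local solution a global kernel vector, as you note), and nonvanishing of the weights from the remaining two observations. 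What your approach buys is independence from the memoir's machinery; what the paper's buys is brevity and the extra information that $U$ is again a weighted shift on $\tcal$. Two small points to make explicit if you write this up: the vertices $v_n$ you construct are pairwise distinct because a directed tree has no circuits (this is needed for $W$ to be a well-defined unitary), and the two recursions defining $v_{n+1}$ and $v_{n-1}$ are consistent precisely because every vertex has exactly one parent and exactly one child. Neither is a real gap.
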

\begin{proof}
In view of \cite[Proposition 3.5.1]{2012-mams-j-j-s}, $U$ coincides with a weighted shift $S_{\widetilde \lambdab}$ and all the weights in $\widetilde\lambdab$ are nonzero. Thus, by \cite[Lemma 8.1.5 and Proposition 8.1.6]{2012-mams-j-j-s}, the directed tree $\tcal$ is isomorphic to $\zbb$ and $\slam$ is unitarily equivalent to a classical two-sided weighted shift.
\end{proof}
\section{Spectrally weakly centered wco's}
The question of commutativity of unbounded operators is a delicate matter. We have two possibilities: pointwise commutativity and spectral commutativity. Recall that operators $A$ and $B$ acting in a Hilbert space $\hh$ {\em pointwise commute} on ${\EuScript D}\subseteq \dz{AB}\cap\dz{BA}$ if and only if $ABf=BAf$ for every $f\in{\EuScript D}$. On the other hand, if $A$ is densely defined and $B$ is normal, we say that $A$ {\em spectrally commutes\footnote{Spectral commutativity is also known as strong commutativity.}} with $B$ if and only $E_B(\sigma)A\subseteq AE_B(\sigma)$ for all $\sigma\in\borel{\cbb}$, where $E_B$ is the spectral measure of $B$. If both $A$ and $B$ are normal, then $A$ spectrally commutes with $B$ if and only $E_B(\sigma)E_A(\omega)= E_A(\omega)E_B(\sigma)$ for all $\sigma,\omega\in\borel{\cbb}$, or equivalently $B$ spectrally commutes with $A$ (in this case, the notion becomes symmetric).

Given a closed densely defined operator $T$ in $\hh$, both the operators $T^*T$ and $T^*$ are self-adjoint, thus we may put forward the following definition: $T$ is {\em spectrally weakly centered} if and only if 
\begin{align*}
E_{T^*T}(\sigma)E_{TT^*}(\omega)= E_{TT^*}(\omega)E_{T^*T}(\sigma),\quad \sigma, \omega \in\borel{\cbb},
\end{align*}
or, equivalently by \cite[Propositions 4.23 and 5.15]{schmudgen},
\begin{align*}
E_{|T|}(\sigma)E_{|T^*|}(\omega)= E_{|T^*|}(\omega)E_{|T|}(\sigma),\quad \sigma, \omega \in\borel{\cbb}.
\end{align*}

Below we answer a question of which unbounded wco's are spectrally weakly centered. As it turns out, the characterization is essentially the same as in the bounded case. We do not use the Alutghe transforms here (see Remark \ref{rudy01}) and rely only on spectral commutativity of $|\cfw|$ and $|\cfw^*|$ (this, by the way, can be an alternative way of proving the main characterization for weakly centered wco's in the bounded case from Theorem \ref{wcent01}).
\begin{thm}\label{gamon01}
Let $\cfw$ be densely defined. The following conditions are equivalent:
\begin{itemize}
\item[(i)] $\hfw=\efw(\hfw)$ a.e. $[\mu_w]$,
\item[(ii)]$\cfw$ is spectrally weakly centered.
\end{itemize}
\end{thm}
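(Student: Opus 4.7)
The plan is to reduce spectral commutativity to a commutation identity involving $\efw$. Since $|\cfw|^2=\cfw^*\cfw=M_{\hfw}$, the spectral projections of $|\cfw|$ are exactly the multiplication operators $M_{\chi_{\{\hfw\in\omega\}}}$ for Borel $\omega\subseteq\rbop$. By the functional calculus (as $t\mapsto t^2$ is a bijection on $[0,\infty)$), spectral commutativity of $|\cfw|$ and $|\cfw^*|$ is equivalent to spectral commutativity of $|\cfw|$ and $\cfw\cfw^*$, and, the above projections being bounded, to the operator inclusion $M_{\chi_{\{\hfw\in\omega\}}}\,\cfw\cfw^*\subseteq \cfw\cfw^*\,M_{\chi_{\{\hfw\in\omega\}}}$ for every Borel $\omega\subseteq\rbop$.

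I would next write down the explicit formula
\[
\cfw\cfw^* g=(\hfw\circ\phi)\,w\,\efw(g/w)\quad\text{a.e.\ $[\mu]$},\qquad g\in\dz{\cfw\cfw^*},
\]
obtained by combining the standard adjoint identity $\cfw^* f=\hfw\,\efw(f/w)\circ\phi^{-1}$ with composition by $\cfw$. Since $\mu_w\big(\phi^{-1}(\{\hfw=0\})\big)=\int_{\{\hfw=0\}}\hfw\D\mu=0$, the factor $(\hfw\circ\phi)w$ is $\mu_w$-a.e.\ nonzero, and hence the operator inclusion above translates into the pointwise identity
\[
\chi_{\{\hfw\in\omega\}}\,\efw(g/w)=\efw\big(\chi_{\{\hfw\in\omega\}}\,g/w\big)\quad\mu_w\text{-a.e.}
\]
for every $g\in\dz{\cfw\cfw^*}$ with $M_{\chi_{\{\hfw\in\omega\}}}g\in\dz{\cfw\cfw^*}$.

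The implication (i)$\Rightarrow$(ii) is then essentially a one-liner: if $\hfw=\efw(\hfw)$ a.e.\ $[\mu_w]$, every $\chi_{\{\hfw\in\omega\}}$ is $\phi^{-1}(\ascr)$-measurable $\mu_w$-a.e., and the pull-out property of $\efw$ yields the last displayed identity for every admissible $g$. For the converse (ii)$\Rightarrow$(i), fix a Borel $\omega$, put $\Delta=\{\hfw\in\omega\}$, and choose $A_n=\{\hfw\circ\phi\Le n\}\cap B_n$ with $B_n\uparrow X$ and $\mu_w(B_n)<\infty$. Then $A_n\uparrow X$ $\mu_w$-a.e.\ (since $\hfw\circ\phi<\infty$ $\mu_w$-a.e.), and the self-adjointness of $\efw$ on $L^2(\mu_w)$ combined with the bound $\hfw\circ\phi\Le n$ on $A_n$ guarantees that both $g_n=w\chi_{A_n}$ and $M_{\chi_\Delta}g_n=w\chi_{\Delta\cap A_n}$ lie in $\dz{\cfw\cfw^*}$. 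Plugging $g=g_n$ into the identity collapses it to $\chi_\Delta\,\efw(\chi_{A_n})=\efw(\chi_{\Delta\cap A_n})$ $\mu_w$-a.e., and passing to the limit $n\to\infty$ via monotone convergence for conditional expectations yields $\chi_\Delta=\efw(\chi_\Delta)$ $\mu_w$-a.e.; varying $\omega$ over Borel subsets of $\rbop$ then recovers condition (i).

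The main obstacle is the careful bookkeeping of unbounded-operator domains: one has to introduce the truncation $\{\hfw\circ\phi\Le n\}$ in the construction of $A_n$ so that the test vectors $g_n$ and $M_{\chi_\Delta}g_n$ genuinely lie in $\dz{\cfw\cfw^*}$, and one has to justify rigorously that spectral commutativity of $|\cfw|$ and $|\cfw^*|$ is equivalent to the operator inclusion displayed at the end of the first paragraph (this last point relies on the standard criterion that spectral commutativity of two self-adjoint operators is determined by the commutation of bounded Borel functions of one with the other).
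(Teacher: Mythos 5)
Your argument is correct and follows essentially the same route as the paper: both identify the spectral projections of $|\cfw|$ with the multiplications $M_{\chi_{\{\hfw\in\omega\}}}$, convert the commutation inclusion (via the explicit formula for $\cfw^*$ and the fact that $w(\hfw\circ\phi)$ is nonzero a.e.\ $[\mu_w]$) into the pull-out identity $\chi_{\{\hfw\in\omega\}}\efw(\cdot)=\efw(\chi_{\{\hfw\in\omega\}}\,\cdot)$, and for the converse test on truncated indicators on which $\hfw\circ\phi$ is bounded before passing to the limit. The only (immaterial) differences are that you work with $\cfw\cfw^*$ where the paper uses $|\cfw^*|$, and your truncating sets $\{\hfw\circ\phi\Le n\}\cap B_n$ play the role of the paper's sets $\varDelta$ with $\chi_\varDelta(1+\hfw\circ\phi)\in L^1(\mu)$.
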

\begin{proof}
Assume that (i) is satisfied. This implies that 
\begin{align*}
\varOmega_\sigma\cap\{w\neq 0\}\in \phi^{-1}(\ascr)\cap\{w\neq 0\},\quad \sigma\in\borel{\cbb},
\end{align*}
where $\varOmega_\sigma:=\big(\sqrt{\hfw}\big)^{-1}(\sigma)$. In particular, we get
\begin{align}\label{unb02}
w\efw\big(\chi_{\varOmega_\sigma} g_w\big)=w\chi_{\varOmega_\sigma} \efw(g_w), \quad \sigma\in\borel{\cbb}, g\in L^2(\mu),
\end{align}
where $g_w=\chi_{\{w\neq0\}}\frac{g}{w}$ (cf. \cite[2.12]{2018-lnim-budzynski-jablonski-jung-stochel}).
The spectral measure $E=E_{|\cfw|}$ of $|\cfw|$ is given by (see \cite[Ex. 5.3, pg. 93]{schmudgen})
\begin{align*}
E(\sigma)g=\chi_{\varOmega_\sigma} g,\quad g\in L^2(\mu).
\end{align*}
Fix $\sigma\in\borel{\cbb}$. Recall that $\dz{|\cfw^*|}=\big\{g\in L^2(\mu)\colon w\,(\hfw\circ\phi)^{1/2}\efw(g_w)\in L^2(\mu)\big\}$ (see \cite[Theorem 18]{2018-lnim-budzynski-jablonski-jung-stochel}). Let $f\in\dz{|\cfw^*|}$. Then, by \eqref{unb02} and \cite[Theorem 18]{2018-lnim-budzynski-jablonski-jung-stochel}, we have
\begin{align*}
w\,(\hfw\circ\phi)^{\frac12}\efw\big((\chi_{\varOmega_\sigma}f)_w\big)&=
w\,(\hfw\circ\phi)^{\frac12}\efw\big(\chi_{\varOmega_\sigma}f_w\big)\\
&= \chi_{\varOmega_\sigma} w\,(\hfw\circ\phi)^{\frac12}\efw(f_w) =E(\sigma)|\cfw^*|f\in L^2(\mu).
\end{align*}
Thus $E(\sigma)f\in \dz{|\cfw^*|}$ and  $E(\sigma)|\cfw^*|f=|\cfw^*|E(\sigma)f$. Since $f$ can be arbitrarily chosen, we get $E(\sigma)|\cfw^*|\subseteq |\cfw^*|E(\sigma)$. This and \cite[Proposition 5.15]{schmudgen} yield $E_{|\cfw|}(\sigma)E_{|\cfw^*|}(\omega)=E_{|\cfw^*|}(\omega)E_{|\cfw|}(\sigma)$ for all $\omega\in\borel{\cbb}$. Consequently, we see that $E_{|\cfw|}(\sigma)$ and $E_{|\cfw^*|}(\omega)$ commute for all $\sigma, \omega\in\borel{\cbb}$. Hence, (ii) holds.

Now, we assume (ii). This implies that $E_{|\cfw|}(\sigma)|\cfw^*|\subseteq |\cfw^*|E_{|\cfw|}(\sigma)$ for every $\sigma\in\borel{\cbb}$. Hence we deduce
\begin{align}\label{unb03}
    \chi_{\varOmega_\sigma} w\, (\hfw\circ\phi)^{1/2} \efw(f_w)=w\, (\hfw\circ\phi)^{1/2} \efw\big(\chi_{\varOmega_\sigma}f_w\big),\quad f\in \dz{|\cfw^*|}.
\end{align}
Let $\varDelta\in\ascr$ satisfy
\begin{align}\label{unb04}
\chi_\varDelta+\chi_\varDelta (\hfw\circ\phi) \in L^1(\mu).
\end{align}
We then have
\allowdisplaybreaks
\begin{align*}
\int_X&\big|w\, (\hfw\circ\phi)^{\frac12}\efw((\chi_\varDelta)_w)\big|^2\D\mu=\int_X|w|^2 (\hfw\circ\phi) \big|\efw((\chi_\varDelta)_w)\big|^2\D\mu\\
&\leqslant \int_X|w|^2 (\hfw\circ\phi) \efw\big(\big|(\chi_\varDelta)_w\big|^2\big)\D\mu
=\int_X (\hfw\circ\phi) \efw\big(\big|(\chi_\varDelta)_w\big|^2\big)\D\mu_w\\
&=\int_X (\hfw\circ\phi) \big|(\chi_\varDelta)_w\big|^2\D\mu_w
=\int_X (\hfw\circ\phi) \big|(\chi_\varDelta)_w\big|^2\D\mu_w\\
&=\int_X (\hfw\circ\phi) \chi_{\varDelta\cap\{w\neq 0\}} \D\mu<\infty.
\end{align*}
This implies that $\chi_\varDelta\in\dz{|\cfw^*|}$ for any $\varDelta$ such that \eqref{unb04} holds. Since $(X,\ascr, \mu)$ is $\sigma$-finite and $\hsf\circ \phi<\infty$ a.e. $[\mu]$ (use \cite[Proposition 10]{2018-lnim-budzynski-jablonski-jung-stochel}), we deduce that there is a sequence $\{\varDelta_n\}_{n=1}^\infty$ of sets satisfying \eqref{unb04} such that $\varDelta_k\subseteq \varDelta_{k+1}$ for every $k\in\nbb$ and $\bigcup_{n=1}^\infty\varDelta_n=X$. Using \eqref{unb03} we get
\begin{align*}
    \chi_{\varOmega_\sigma} \efw(\chi_{\varDelta_n})= \efw(\chi_{\varOmega_\sigma}\chi_{\varDelta_n})\text{ a.e. $[\mu_w]$},\quad n\in\nbb, \sigma\in\ascr.
\end{align*}
Since $\chi_{\varDelta_n}\nearrow \chi_X$ as $n\to+\infty$ and $\efw{\chi_X}=\chi_X$, we get
\begin{align*}
    \chi_{\varOmega_\sigma}= \efw(\chi_{\varOmega_\sigma})\text{ a.e. $[\mu_w]$}\quad  \sigma\in\borel{\cbb}.
\end{align*}
This implies (i).
\end{proof}
Spectral commutativity of selfadjoint operators implies pointwise commutativity. The question of whether the reverse holds in general has been answered negatively by Nelson in \cite{1959-am-nelson} (the unilateral shift serves as a basis for a construction of a counterexample). However, if some additional assumptions are imposed, the answer is positive -- see the Nelson criterion for spectral commutativity \cite[Corollary 9.1]{1959-am-nelson}. Below we provide a criterion for $\cfw$ to be spectrally weakly centered based on pointwise commutativity of $|\cfw|$ and $|\cfw^*|$. Its proof uses an operator $\pfw$ in $L^2(\mu)$ given by 
\begin{align*}
\pfw f=w\efw(f_w),\quad f\in L^2(\mu).
\end{align*}
As shown in \cite[Lemma 4.2]{2020-mn-benhida-budzynski-trepkowski}, $\pfw$ is an orthogonal projection.
\begin{pro}\label{gamon01+}
Let $\cfw$ be densely defined. The following conditions are equivalent:
\begin{itemize}
\item[(i)]$\cfw$ is spectrally weakly centered,
\item[(ii)] there exists ${\EuScript D}\subseteq \dz{|\cfw||\cfw^*|}\cap\dz{|\cfw^*||\cfw|}\cap\dz{M_{\hfw}}$, a dense subspace of $L^2(\mu)$, such that $|\cfw||\cfw^*|f=|\cfw^*||\cfw|f$ for every $f\in {\EuScript D}$.
\end{itemize}
\end{pro}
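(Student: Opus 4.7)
The plan is to prove the two implications separately, using Theorem \ref{gamon01} to reduce (ii)$\Rightarrow$(i) to verifying $\hfw=\efw(\hfw)$ a.e.\ $[\mu_w]$.

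For (i)$\Rightarrow$(ii), I would produce ${\EuScript D}$ via a joint spectral cutoff. Since $|\cfw|$ and $|\cfw^*|$ spectrally commute, the operators
\begin{align*}
P_n:=E_{|\cfw|}([0,n])\,E_{|\cfw^*|}([0,n]),\quad n\in\nbb,
\end{align*}
are well defined orthogonal projections with $P_n\to I$ strongly. On $P_nL^2(\mu)$ both $|\cfw|$ and $|\cfw^*|$ are bounded self-adjoint operators with spectrum in $[0,n]$, so $P_nL^2(\mu)\subseteq\dz{|\cfw|^2}=\dz{M_{\hfw}}$ and the bounded Borel functional calculus for commuting self-adjoint operators gives $|\cfw||\cfw^*|f=|\cfw^*||\cfw|f$ on $P_nL^2(\mu)$. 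Setting ${\EuScript D}:=\bigcup_{n\in\nbb} P_nL^2(\mu)$ yields the required dense subspace.

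For (ii)$\Rightarrow$(i), I would expand both sides of the assumed identity using $|\cfw|g=\sqrt{\hfw}\,g$ and $|\cfw^*|g=w(\hfw\circ\phi)^{1/2}\efw(g_w)$ from \cite[Theorem 18]{2018-lnim-budzynski-jablonski-jung-stochel}. For $f\in{\EuScript D}$ this gives
\begin{align*}
w\sqrt{\hfw(\hfw\circ\phi)}\,\efw(f_w)=w(\hfw\circ\phi)^{1/2}\efw\big(\sqrt{\hfw}\,f_w\big)\quad\text{a.e.\ $[\mu]$}.
\end{align*}
Combined with $\hfw\circ\phi>0$ a.e.\ $[\mu_w]$ (which follows because $\{\hfw\circ\phi=0\}=\phi^{-1}(\{\hfw=0\})$ has null $\mu_w$-measure by the very definition of $\hfw$), one cancels $w(\hfw\circ\phi)^{1/2}$ to obtain
\begin{align*}
\sqrt{\hfw}\,\efw(f_w)=\efw\big(\sqrt{\hfw}\,f_w\big)\quad\text{a.e.\ $[\mu_w]$,}\quad f\in{\EuScript D}.
\end{align*}
This is precisely the commutation $M_{\sqrt{\hfw}}\pfw f=\pfw M_{\sqrt{\hfw}}f$ on ${\EuScript D}$, which explains the role of $\pfw$.

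The remaining step, which I expect to be the core difficulty, is to promote the above to $\phi^{-1}(\ascr)$-measurability of $\sqrt{\hfw}$ a.e.\ $[\mu_w]$. Since $L^2(\mu)\ni f\mapsto f_w\in L^2(\mu_w)$ is a bounded surjection (the right inverse $g\mapsto wg$ is isometric), $\{f_w:f\in{\EuScript D}\}$ is dense in $L^2(\mu_w)$, and consequently $\{\efw(f_w):f\in{\EuScript D}\}$ is dense in the closed subspace of $\phi^{-1}(\ascr)$-measurable members of $L^2(\mu_w)$. I would choose an exhausting sequence $B_k\in\phi^{-1}(\ascr)$ with $\mu_w(B_k)<\infty$ and $B_k\uparrow X$, approximate $\chi_{B_k}$ in $L^2(\mu_w)$ by a sequence $\efw((f_j)_w)$ with $f_j\in{\EuScript D}$, and observe that each $\sqrt{\hfw}\,\efw((f_j)_w)=\efw(\sqrt{\hfw}\,(f_j)_w)$ is $\phi^{-1}(\ascr)$-measurable. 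Since multiplication by the a.e.\ finite function $\sqrt{\hfw}$ preserves convergence in $\mu_w$-measure on the finite-measure set $B_k$, passing to an a.e.\ convergent subsequence yields a $\phi^{-1}(\ascr)$-measurable version of $\sqrt{\hfw}\chi_{B_k}$; letting $k\to\infty$ gives a $\phi^{-1}(\ascr)$-measurable version of $\sqrt{\hfw}$ and hence of $\hfw$ a.e.\ $[\mu_w]$. Theorem \ref{gamon01} then closes the proof. The main obstacle is precisely the unboundedness of $\sqrt{\hfw}$: one cannot extend the identity to all of $L^2(\mu_w)$ in norm and must therefore argue via convergence in $\mu_w$-measure on the sets $B_k$ of finite measure.
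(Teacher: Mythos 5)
Your proof is correct, but the hard implication (ii)$\Rightarrow$(i) follows a genuinely different route from the paper's. For (i)$\Rightarrow$(ii) the paper simply cites \cite[Corollary 5.28]{schmudgen}; your explicit joint spectral truncations $P_n=E_{|\cfw|}([0,n])E_{|\cfw^*|}([0,n])$ realize the same idea and correctly land in $\dz{M_{\hfw}}$. For (ii)$\Rightarrow$(i) both arguments start from the same reduced identity $w\sqrt{\hfw}\,\efw(f_w)=w\,\efw\big(\sqrt{\hfw}f_w\big)$ a.e.\ $[\mu]$, i.e.\ the pointwise commutation of $M_{\sqrt{\hfw}}$ with the projection $\pfw$ on ${\EuScript D}$; but the paper then promotes pointwise to spectral commutativity via Nelson's criterion, by showing that $(\pfw^2+M_{\sqrt{\hfw}}^2)|_{\EuScript D}$ is essentially selfadjoint --- and that is precisely where the hypothesis ${\EuScript D}\subseteq\dz{M_{\hfw}}$ is consumed. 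You bypass Nelson entirely: since $f\mapsto f_w$ maps $L^2(\mu)$ boundedly onto $L^2(\mu_w)$, the set $\{\efw(f_w)\colon f\in{\EuScript D}\}$ is dense in the range of $\efw$, and approximating $\chi_{B_k}$ (for an exhausting sequence $B_k\in\phi^{-1}(\ascr)$ of finite $\mu_w$-measure, which exists because $\mu_w|_{\phi^{-1}(\ascr)}$ is $\sigma$-finite whenever $\efw$ exists) and passing to $\mu_w$-a.e.\ convergent subsequences produces a $\phi^{-1}(\ascr)$-measurable version of $\sqrt{\hfw}$, after which Theorem \ref{gamon01} closes the argument. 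What your approach buys is worth emphasizing: nowhere do you appear to use ${\EuScript D}\subseteq\dz{M_{\hfw}}$, so if the argument is airtight it establishes the stronger implication from condition (iii) of the Problem stated after the proposition, answering that question affirmatively. Given that consequence you should double-check the two load-bearing steps --- the cancellation of $w(\hfw\circ\phi)^{1/2}$ (valid a.e.\ $[\mu_w]$ because $\hfw\circ\phi>0$ a.e.\ $[\mu_w]$) and the passage from a $\mu_w$-a.e.\ limit of $\phi^{-1}(\ascr)$-measurable functions to a $\phi^{-1}(\ascr)$-measurable version of the limit --- but I do not see a gap in either.
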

\begin{proof}
The implication (i)$\Rightarrow$(iii) holds by \cite[Corollary 5.28]{schmudgen}.

Assume (ii). Consider two operators $\psf=\pfw$ and $M=|\cfw|=M_{\sqrt{\hfw}}$. It is clear that ${\EuScript D}\subseteq \dz{\psf M}\cap\dz{\psf^2}\cap\dz{M^2}$. Moreover, ${\EuScript D}\subseteq \dz{M\psf}$. Indeed, by pointwise commutativity of $|\cfw|$ and $|\cfw^*|$, and \cite[Theorem 18]{2018-lnim-budzynski-jablonski-jung-stochel} we have
\begin{align}\label{koordynator01}
w\sqrt{\hfw}\efw(f_w)=w\efw\Big(\sqrt{\hfw} f_w\Big),\quad f\in {\EuScript D}.
\end{align}
Thus we get\allowdisplaybreaks
\begin{align*}
\int_X\hfw \Big| w\efw\big( f_w\big)\Big|^2\D\mu
&=\int_X |w|^2\Big| \efw\big(\sqrt{\hfw} f_w\big)\Big|^2\D\mu
=\int_X \Big| \efw\big(\sqrt{\hfw}f_w\big)\Big|^2\D\mu_w\\
&\leqslant \int_X \efw\Big(\hfw |f_w|^2\Big)\D\mu_w
=\int_X \chi_{\{w\neq0\}}\hfw |f|^2\D\mu<+\infty.
\end{align*}
(Recall that ${\EuScript D}\subseteq \dz{|\cfw|}=L^2((1+\hfw)\D\mu)$.) This proves that $\psf f\in\dz{M}$ for every $f\in{\EuScript D}$, consequently ${\EuScript D}\subseteq \dz{M\psf}$. Hence ${\EuScript D}\subseteq \dz{\psf M}\cap\dz{M\psf}\cap\dz{\psf^2}\cap\dz{M^2}$. In view of \eqref{koordynator01}, we have
\begin{align*}
\psf Mf=M\psf f,\quad f\in {\EuScript D}.
\end{align*}
We now show that $A=(\psf ^2+M^2)|_{{\EuScript D}}$ is essentially selfadjoint, i.e. $A^*=\bar A$, where $\bar A$ is the closure of $A$. Clearly, $A=(\psf^2 +M^2)|_{{\EuScript D}}=\psf|_{{\EuScript D}}+M^2|_{{\EuScript D}}=\psf +N$, with $Nf=\hfw f$ for $f\in{\EuScript D}$. Since $\psf
\in\ogr{L^2(\mu)}$, essential selfadjointness of $A$ is equivalent to that of $N$. Since $\bar N\subseteq \overline{M_{\hfw}}=M_{\hfw}$ we get $\bar N\subseteq M_{\hfw}\subseteq {\bar N}^*$ which means that $\bar N$ is symmetric. Moreover, ${\mathcal N}(N^*-{\mathrm i}I)=\{0\}$. Indeed, for any $f\in {\mathcal N}(N^*-{\mathrm i}I)$ we have
\begin{align*}
\is{{\mathrm i} f}{g}=\is{N^*f}{g}=\is{f}{M_{\hfw}g}=\is{f}{\hfw g}=\is{\hfw f}{g},\quad g\in {\EuScript D}.
\end{align*}
Hence
\begin{align*}
\int_{X} ({\mathrm i}-\hfw) f\bar g\D\mu=0,\quad g\in {\EuScript D}.
\end{align*}
Since ${\EuScript D}$ is dense in $L^2(\mu)$, we get that $({\mathrm i}-\hfw) f=0$ a.e. $[\mu]$. This implies that $f=0$ a.e. $[\mu]$ and so ${\mathcal N}(N^*-{\mathrm i}I)=\{0\}$ is proved. Now, by \cite[Proposition 3.8]{schmudgen}, $N$ is essentially selfadjoint and thus $A$ is essentially selfadjoint. Hence, in view of the Nelson criterion, $\psf$ spectrally commutes with $M$. Therefore
\begin{align}\label{tomo01}
\psf E_M(\sigma)=E_M(\sigma)\psf ,\quad \sigma\in\borel{\cbb},
\end{align}
where $E_M$ is the spectral measure for $M$. Since $E_M(\sigma) g=\chi_{\varOmega_\sigma} g$ for $g\in L^2(\mu)$, with $\varOmega_\sigma=\big(\sqrt{\hfw}\big)^{-1}(\sigma)$, we deduce from \eqref{tomo01} that
\begin{align*}
\efw\Big(\chi_{\varOmega_\sigma} g_w\Big)=\chi_{\varOmega_\sigma}\efw(g_w) \text{ a.e. }[\mu_w], \quad g\in L^2(\mu).
\end{align*}
From this we get $\chi_{\varOmega_\sigma} =\efw \big(\chi_{\varOmega_\sigma}\big)$ a.e. $[\mu_w]$ for  every $\sigma\in\borel{\cbb}$. This implies (i) and completes the proof.
\end{proof}
The above yields an interesting problem.
\begin{opq}
Could the condition (ii) of Proposition \ref{gamon01+} be replaced by
\begin{enumerate}
\item[(iii)] there exists ${\EuScript D}\subseteq \dz{|\cfw||\cfw^*|}\cap\dz{|\cfw^*||\cfw|}$, a dense subspace of $L^2(\mu)$, such that $|\cfw||\cfw^*|f=|\cfw^*||\cfw|f$ for every $f\in {\EuScript D}$?
\end{enumerate}
In another words, the question is whether ``pointwise'' weakly centered wco's are spectrally weakly centered.
\end{opq}
\begin{rem}
In view of Theorem \ref{gamon01}, all the results of the previous section implied by Theorem \ref{wcent01} are valid in the context of unbounded wco's as long as {\em weakly centered} is replaced by {\em spectrally weakly centered} and the operators are assumed to be densely defined. The above applies to examples as well. In particular, Example \ref{sad04} provides a large class of spectrally weakly centered operators that are not bounded. For this it suffices to drop the assumptions that {\em $\rho$ is a polynomial or $\|\phi^{-1}\|\leqslant 1$}.
\end{rem}
\begin{rem}\label{rudy01}
Regarding possibility of using the Alutghe transform in the context of unbounded wco's it is worth pointing out that it may be essentially more complicated than in the bounded case. For example, the transform of a hyponormal composition operator may have trivial domain as was shown in \cite{2015-jmaa-trepkowski}. Such a pathological example is not possible if the transformed operator is spectrally weakly centered wco - the Alutghe transform of a densely defined wco is densely defined (this follows from Theorem \ref{gamon01} and \cite[Corollary 3.9]{2020-mn-benhida-budzynski-trepkowski}). However, it may happen that $\Delta_{1/2}(\cfw)\neq C_{\phi, w_{1/2}}$. 

Indeed, consider $(X,\ascr, \mu)=(\zbb, 2^\zbb, \nu)$ with $\nu$ being the counting measure. Let $\phi\colon X\to X$ be given by $\phi(n)=n-1$. Finally, let $w\colon X\to \cbb$ be defined by $w(3n)=1$, $w(3n+1)=n^2$, $w(3n+2)=\frac{1}{n^2}$ for all $n\in\nbb$, and $w(k)=1$ for all $k\in \zbb\setminus\nbb$. Then $\cfw$ is densely defined. Since $\phi$ is a bijection, we have $\efw(\hfw)=\hfw$, which implies that $\cfw$ is spectrally weakly centered. On the other hand, it is easily seen that there is no positive constant $c$ that would satisfy
\begin{align*}
\sqrt{\hsf(x)}\leqslant c\Big(1+\efw\big(\sqrt{\hfw}\big)\circ\phi^{-1}(x)\sqrt{\hfw(x)}\Big),\quad x\in X,
\end{align*}
(for $x=3n$, $n\in\nbb$, the left hand side is equal to $n$ while the right hand side is equal to $2c$), which implies that $\Delta_{1/2}(\cfw)\neq C_{\phi,w_{1/2}}$ (see \cite[Theorem 3.2]{2020-mn-benhida-budzynski-trepkowski}). Moreover, $\Delta_{1/2}(\cfw)$ is not even a closed operator.
\end{rem}
\section{Invariant subspaces}
A closed subspace $\mathcal{M}$ of a Hilbert space $\hh$ is called an {\em invariant subspac}e for an operator $T\in \ogr{\hh}$ if $T\mathcal{M}\subseteq \mathcal{M}$. The ones that are of interest are different from $\{0\}$ and $\hh$; we call them {\em nontrivial}. Recall a result on invariant subspaces due to Campbell (see \cite[Theorem 2]{1972-pams-campbell}).
\begin{thm}\label{stach01}
Let $T\in\ogr{\hh}$ be hyponormal and weakly centered. Then there exists a nontrivial invariant subspace of $T$.
\end{thm}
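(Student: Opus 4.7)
The plan is to reduce to a setting where the polar decomposition $T=U|T|$ has $U$ unitary, and then to realize invariant subspaces of $T$ as spectral subspaces of $|T|$. First I would dispose of the cases in which the kernel or the closure of the range of $T$ already provides a nontrivial invariant subspace. If $\jd{T}\neq\{0\}$ then $\jd{T}$ is $T$-invariant (and nontrivial unless $T=0$, in which case, assuming $\dim\hh\ge 2$, any proper nonzero subspace will do). If $\overline{\mathcal{R}(T)}\neq\hh$ then $\overline{\mathcal{R}(T)}$ is a proper closed $T$-invariant subspace. Hence I may assume that $T$ is injective with dense range, so the partial isometry $U$ in the polar decomposition $T=U|T|$ is unitary. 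In this setting hyponormality reads $|T|^2\ge U|T|^2U^*=|T^*|^2$, while weak centeredness reads $[|T|^2,|T^*|^2]=0$; commutativity together with positivity then allows passing to square roots, giving $|T|\ge|T^*|$, $[|T|,|T^*|]=0$, and $|T^*|=U|T|U^*$.

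The heart of the argument is the following spectral-projection comparison. For any two commuting positive operators $A\ge B\ge 0$, the joint spectral theorem produces a projection-valued measure $F$ on $\rbb_+^2$ supported in $\{(s,t):s\ge t\}$ with $A=\int s\,dF$ and $B=\int t\,dF$; hence
\[
E_B([a,\infty))=F(\{t\ge a\})\le F(\{s\ge a\})=E_A([a,\infty)),\quad a\ge 0.
\]
Applied to $(A,B)=(|T|,|T^*|)$, together with the identity $E_{|T^*|}(\sigma)=UE_{|T|}(\sigma)U^*$ (which follows from $|T^*|=U|T|U^*$ and the unitarity of $U$), this yields $UE_aU^*\le E_a$, where $E_a:=E_{|T|}([a,\infty))$. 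Since $P\le Q$ between projections is equivalent to $QP=P$, this rewrites as $E_a(UE_aU^*)=UE_aU^*$; multiplying on the right by $U$ gives $E_aUE_a=UE_a$, which says precisely that $\mathcal{R}(E_a)$ is $U$-invariant. Combined with its automatic $|T|$-invariance, $\mathcal{R}(E_a)$ is then $T$-invariant.

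It remains to choose $a$ so that $\mathcal{R}(E_a)\neq\{0\},\hh$. If the spectrum $\sigma(|T|)$ contains at least two distinct points, any $a$ strictly between its infimum and supremum does the job. Otherwise $\sigma(|T|)=\{c\}$ with $c>0$, so $|T|=cI$ and $T=cU$ is normal; then the spectral theorem for normal operators supplies a nontrivial invariant subspace (via a nontrivial spectral projection of $U$ when $U$ is non-scalar, or via any one-dimensional subspace when $T$ itself is a scalar and $\dim\hh\ge 2$). I expect the main technical point to be the spectral-projection domination for commuting positive operators in the key step; everything else is essentially bookkeeping with the polar decomposition.
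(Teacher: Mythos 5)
Your proof is correct, and it follows essentially the same route as the argument the paper relies on: the paper itself does not prove Theorem \ref{stach01} but cites Campbell, and in Section 5 it identifies the two key ingredients of his proof --- the intertwining $F_A(x)T=TF_B(x)$ and the spectral-order domination $F_B(x)\leqslant F_A(x)$ for $A=TT^*$, $B=T^*T$ --- which are exactly the facts you establish (your subspace $\mathcal{R}\big(E_{|T|}([a,\infty))\big)$ is the same spectral subspace $\jd{F_B(t)}$, and your joint-spectral-measure lemma is precisely the commutativity-plus-positivity route to $F_B\leqslant F_A$ used via Olson's spectral order). Your preliminary reduction to a unitary phase $U$ is a mild streamlining that lets you obtain the intertwining from $|T^*|=U|T|U^*$, but it does not change the substance of the argument.
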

Using the above and Theorem \ref{wcent01} we deduce  the following criterion for existence of invariant subspaces for wco's.
\begin{pro}\label{sad05}
Let $\cfw\in \ogr{L^2(\mu)}$ satisfy $\efw(\hfw)=\hfw$ a.e. $[\mu_w]$ and $\hfw\circ \phi\leqslant\hfw$ a.e. $[\mu_w]$. Then $\cfw$ has a nontrivial invariant subspace.
\end{pro}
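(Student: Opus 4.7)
\smallskip

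\noindent\textbf{Proof plan for Proposition \ref{sad05}.} The plan is to invoke Campbell's Theorem \ref{stach01}: the conclusion follows at once once we verify that $\cfw$ is both \emph{weakly centered} and \emph{hyponormal}. The first property is free: the assumption $\efw(\hfw)=\hfw$ a.e.\ $[\mu_w]$ is precisely condition (iii) of Theorem \ref{wcent01}, so $\cfw$ is weakly centered. All the work therefore goes into establishing hyponormality under the two hypotheses $\efw(\hfw)=\hfw$ and $\hfw\circ\phi\Le\hfw$ a.e.\ $[\mu_w]$.

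To prove hyponormality, I would compare $\|\cfw f\|^2$ with $\|\cfw^* f\|^2$ for arbitrary $f\in L^2(\mu)$. The computation of $\|\cfw f\|^2$ is a direct change of variable via $\mu_w\circ\phi^{-1}$, giving $\|\cfw f\|^2=\int \hfw |f|^2\,\D\mu$. For $\|\cfw^* f\|^2$, using \cite[Theorem 18]{2018-lnim-budzynski-jablonski-jung-stochel} (exactly as in the derivation in the proof of Theorem \ref{wcent01}) one obtains $\cfw^* f=\hfw\cdot \efw(f_w)\circ\phi^{-1}$ and then, by a second change of variable, $\|\cfw^* f\|^2=\int (\hfw\circ\phi)\,|\efw(f_w)|^2\,\D\mu_w$, where $f_w=\chi_{\{w\neq 0\}}f/w$.

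The chain of inequalities proving $\|\cfw^* f\|^2\Le\|\cfw f\|^2$ is the heart of the matter and I expect it to be the main technical step, though not deep. Starting from the right-hand side, since $|w|^2|f_w|^2=|f|^2$ on $\{w\neq 0\}$ we drop the part over $\{w=0\}$ to get $\|\cfw f\|^2\Ge \int \hfw |f_w|^2\,\D\mu_w$. The hypothesis $\efw(\hfw)=\hfw$ a.e.\ $[\mu_w]$ makes $\hfw$ behave as a $\phi^{-1}(\ascr)$-measurable factor under $\efw$, so
\begin{align*}
\int \hfw |f_w|^2\,\D\mu_w
=\int \efw\big(\hfw|f_w|^2\big)\,\D\mu_w
=\int \hfw\, \efw(|f_w|^2)\,\D\mu_w
\Ge \int \hfw\,|\efw(f_w)|^2\,\D\mu_w,
\end{align*}
where the last step is Jensen's inequality for the conditional expectation $\efw$. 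Finally, the second hypothesis $\hfw\circ\phi\Le \hfw$ a.e.\ $[\mu_w]$ yields
\begin{align*}
\int \hfw\,|\efw(f_w)|^2\,\D\mu_w \Ge \int (\hfw\circ\phi)\,|\efw(f_w)|^2\,\D\mu_w = \|\cfw^* f\|^2.
\end{align*}

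Concatenating, $\|\cfw^* f\|\Le \|\cfw f\|$ for every $f\in L^2(\mu)$, which is hyponormality. Combined with the already established weakly centered property, Theorem \ref{stach01} produces a nontrivial invariant subspace for $\cfw$. The only subtlety to be careful about is that all manipulations with $\efw$ and the two measures $\mu$ and $\mu_w$ take place on $\{w\neq 0\}$, where $f_w$ lives; on $\{w=0\}$ the contribution to $\|\cfw^* f\|^2$ vanishes automatically and the right-hand side is only enlarged, which is all we need.
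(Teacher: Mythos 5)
Your proposal is correct and follows the same overall skeleton as the paper: reduce to Campbell's Theorem \ref{stach01} by checking that $\cfw$ is weakly centered (immediate from Theorem \ref{wcent01} and the hypothesis $\efw(\hfw)=\hfw$ a.e.\ $[\mu_w]$) and hyponormal. The one place where you diverge is the hyponormality step: the paper disposes of it by citing the known characterization of hyponormal wco's (\cite[Theorem 53]{2018-lnim-budzynski-jablonski-jung-stochel}, together with \cite[Lemma 6]{2018-lnim-budzynski-jablonski-jung-stochel} for the positivity requirement $\hfw\circ\phi>0$ a.e.\ $[\mu_w]$), whereas you verify $\|\cfw^*f\|\Le\|\cfw f\|$ by hand. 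Your computation is sound: the identities $\|\cfw f\|^2=\int\hfw|f|^2\,\D\mu$ and $\|\cfw^*f\|^2=\int(\hfw\circ\phi)\,|\efw(f_w)|^2\,\D\mu_w$ are the standard consequences of \cite[Theorem 18]{2018-lnim-budzynski-jablonski-jung-stochel}, the pull-out property $\efw(\hfw\,|f_w|^2)=\hfw\,\efw(|f_w|^2)$ is legitimate precisely because $\hfw=\efw(\hfw)$ a.e.\ $[\mu_w]$ makes $\hfw$ equal $\mu_w$-a.e.\ to a $\phi^{-1}(\ascr)$-measurable function, and the conditional Cauchy--Schwarz/Jensen step plus $\hfw\circ\phi\Le\hfw$ a.e.\ $[\mu_w]$ finish the chain; boundedness of $\cfw$ guarantees all integrals are finite. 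What your route buys is self-containedness and the observation that, in the presence of $\efw(\hfw)=\hfw$, the Harrington--Whitley-type condition of \cite[Theorem 53]{2018-lnim-budzynski-jablonski-jung-stochel} collapses to the simple monotonicity $\hfw\circ\phi\Le\hfw$; what the paper's citation buys is brevity.
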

\begin{proof}
According to Theorem \ref{stach01}, $\cfw$ has a nontrivial invariant subspace whenever $\cfw$ hyponormal and weakly centered. In view of \ref{wcent01}, $\efw(\hfw)=\hfw$ a.e. $[\mu_w]$ implies that $\cfw$ is weakly centered. In turn, $0<\hfw\circ \phi\leqslant\hfw$ a.e. $[\mu_w]$ imply that $\cfw$ is hyponormal by \cite[Theorem 53]{2018-lnim-budzynski-jablonski-jung-stochel} ($0<\hfw\circ \phi$ a.e. $[\mu_w]$ holds due to \cite[Lemma 6]{2018-lnim-budzynski-jablonski-jung-stochel}). 
\end{proof}
\begin{exa}
Let $C_\phi\in\ogr{L^2(\mu)}$ be as in Example \ref{sad04}. If $\phi^{-1}$ is a contraction on $\rbb^n$, then $\phi^{-1}(B)\subseteq B$, where $B$ denotes a ball $\{x\in\rbb^n\colon \|x\|\leqslant r\}$ with $r>0$. Consequently, by \cite[Proposition 2.4]{2017-bims-azi-jab-jaf}, $C_\phi$ has a nontrivial invariant subspace. When $\phi^{-1}$ is not a contraction (if this is the case, $\rho$ has to be a polynomial for $C_\phi$ to be bounded), one is left with Proposition \ref{sad05} when looking for examples of $C_\phi$'s with invariant spaces. 

By the change of the variables theorem we get
\begin{align*} 
\hsf_\phi(x)= \frac{1}{|\det \phi|} \frac{\rho\big(\|\phi^{-1}(x)\|^2\big)}{\rho\big(\|x\|^2\big)} \quad \text{ for $\mu$-a.e. } x \in \rbb^n.
\end{align*}
Thus, by Proposition \ref{sad05}, $C_\phi$ has a nontrivial invariant space whenever 
\begin{align}\label{sad06}
\text{$\rho^2\big(\|x\|^2\big)\leqslant\rho\big(\|\phi^{-1}(x)\|^2\big)\rho\big(\|\phi(x)\|^2\big)$ for $\M_n$-almost every $x\in\rbb^n$.}
\end{align}

Consider $\phi(x)=\alpha x$, $x\in\rbb$, with $0<\alpha<1$, and $\rho(z)=Az+B$, $z\in\cbb$, with $A, B>0$. Then $\phi^{-1}$ is not a contraction and so \cite[Proposition 2.4]{2017-bims-azi-jab-jaf} is not applicable. On the other hand, one can easily check that \eqref{sad06} is equivalent to
\begin{align*}
\big(\alpha^2 + \frac{1}{\alpha^2}-2\big)t\geqslant 0 \text{ for } t\geqslant 0.
\end{align*}
The latter is satisfied for all $\alpha\in\rbb$. This means that $C_\phi$ has to have a nontrivial invariant subspace (note that this is true independent of choice of $A$ and $B$).
\end{exa}

We now turn our attention to unbounded operators. If $T$ is an operator in a Hilbert space $\hh$ and $\mathcal{M}$ is a closed subspace of $\hh$ such that $T(\mathcal{M}\cap \dz{T})\subseteq \mathcal{M}$, then $\mathcal{M}$ is called an {\em invariant space} for $T$. In view of Proposition \ref{sad05} and results on spectrally weakly centered wco's, it is natural to ask wether anything can be said about invariant subspaces for unbounded wco's.

A close examination of the proof of Theorem \ref{stach01} reveals its two important components: one is the equality $F_A(x)T=TF_B(x)$ and the other is the inequality $F_B(x)\leqslant F_A(x)$, where $F_A$ and $F_B$ denote spectral distributions of $A=TT^*$ and $B=T^*T$, respectively. Recall that $F_Y(x)=E_Y\big((-\infty, x]\big)$ for $x\in\rbb$ and $E_Y$ is the spectral measure for $Y\in\{A,B\}$. We will investigate these two in the context of unbounded wco's.

Suppose that $\cfw$ is densely defined. Let us denote $A=\cfw\cfw^*$ and $B=\cfw^*\cfw$. Both $A$ and $B$ are positive selfadjoint. Due to \cite[Theorem 18]{2018-lnim-budzynski-jablonski-jung-stochel} and \cite[Ex. 5.3, pg. 93]{schmudgen}, the spectral measure of $B$ is given by
\begin{align}\label{staszek01}
E_B(\sigma)=M_{\chi_{\varDelta_\sigma}},\quad \sigma\in\borel{\rbb},
\end{align}
with $\varDelta_\sigma=\hfw^{-1}(\sigma)$. Now we calculate the spectral measure $E_A$. For this we will use the fact that $A= M \psf$, where $M=M_{\hfw\circ\phi}$ (see \cite[Theorem 4.3]{2020-mn-benhida-budzynski-trepkowski}) and $\psf=\pfw$. The spectral measure of $M$ is $E_M(\sigma) = M_{\chi_{\varGamma_\sigma}}$ with $\varGamma_\sigma=(\hfw\circ\phi)^{-1}(\sigma)$ for all $\sigma\in\borel{\rbb}$. Since $\psf $ is an orthogonal projection such that $\psf M\subseteq M\psf $, the spectral measure of $A$ is given by (see \cite[proof of Lemma 4.1]{2020-mn-benhida-budzynski-trepkowski})
\begin{align}\label{staszek02}
E_A(\sigma)=E_M(\sigma)\psf +\chi_{\sigma}(0)(I-\psf )=M_{\chi_{\varGamma_\sigma}}\psf +\chi_{\sigma}(0)(I-\psf ),\quad \sigma\in\borel{\rbb}.
\end{align}

We are ready to prove the following.
\begin{lem}\label{staszek03l}
Assume that $\cfw$ densely defined and $\jd{\cfw^*}=\{0\}$. Then the following holds
\begin{align}\label{staszek03}
E_A(\sigma) \cfw \subseteq \cfw E_B(\sigma),\quad \sigma\in \rbb.
\end{align}
\end{lem}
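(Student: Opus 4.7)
The plan is to invoke the explicit spectral-measure formulas \eqref{staszek01} and \eqref{staszek02} and reduce the inclusion to an elementary identity between characteristic functions. First I would use the hypothesis $\jd{\cfw^*}=\{0\}$ to force $\pfw=I$. Since $\cfw$ is closed, a standard computation gives $\|\cfw^*f\|^2=\is{Af}{f}$ for $f\in\dz{A}$, so $\jd{A}=\jd{\cfw^*}=\{0\}$. The factorization $A=M\pfw$ (with $M=M_{\hfw\circ\phi}$) then implies $\mathcal{R}(I-\pfw)\subseteq \dz{A}$ and $A(I-\pfw)=0$, whence $\mathcal{R}(I-\pfw)\subseteq \jd{A}=\{0\}$. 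Thus $\pfw=I$, and \eqref{staszek02} collapses to $E_A(\sigma)=M_{\chi_{\varGamma_\sigma}}$, while \eqref{staszek01} reads $E_B(\sigma)=M_{\chi_{\varDelta_\sigma}}$.

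Next I would fix $\sigma\in\borel{\rbb}$ and $f\in\dz{\cfw}$. Since $\dz{\cfw}=L^2((1+\hfw)\D\mu)$ and $|\chi_{\varDelta_\sigma}f|\leqslant |f|$, it follows that $E_B(\sigma)f\in\dz{\cfw}$, so both sides of \eqref{staszek03} are well defined on $\dz{\cfw}$. The crucial (and purely set-theoretic) identity
\begin{align*}
\phi^{-1}(\varDelta_\sigma)=\phi^{-1}\big(\hfw^{-1}(\sigma)\big)=(\hfw\circ\phi)^{-1}(\sigma)=\varGamma_\sigma
\end{align*}
yields $\chi_{\varDelta_\sigma}\circ\phi=\chi_{\varGamma_\sigma}$, and therefore
\begin{align*}
\cfw E_B(\sigma)f = w\cdot (\chi_{\varDelta_\sigma}f)\circ\phi = \chi_{\varGamma_\sigma}\cdot w\cdot (f\circ\phi) = E_A(\sigma)\cfw f,
\end{align*}
which is exactly \eqref{staszek03}.

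The main obstacle is the first step, namely deducing $\pfw=I$ from $\jd{\cfw^*}=\{0\}$; this is precisely what removes the projection term in \eqref{staszek02} and makes $E_A$ into a pure multiplication operator compatible with $\cfw$. Once $\pfw=I$ is established, the verification of \eqref{staszek03} on $\dz{\cfw}$ is routine and relies only on the measure-theoretic interplay between $\hfw$ and $\phi$.
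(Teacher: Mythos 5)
Your proof is correct and follows essentially the same route as the paper: both arguments rest on the explicit formulas \eqref{staszek01} and \eqref{staszek02}, the domain inclusion $\dz{E_A(\sigma)\cfw}\subseteq\dz{\cfw E_B(\sigma)}$, and the set-theoretic identity $\chi_{\varDelta_\sigma}\circ\phi=\chi_{\varGamma_\sigma}$. The only difference is how the hypothesis $\jd{\cfw^*}=\{0\}$ is used to remove the projection term from \eqref{staszek02}: you deduce $\pfw=I$ operator-theoretically from $\jd{A}=\jd{\cfw^*}=\{0\}$ and the factorization $A=M\pfw$, whereas the paper deduces $w\neq 0$ a.e.\ $[\mu]$ from $\chi_{\{w=0\}}L^2(\mu)\subseteq\jd{\cfw^*}$ and then checks that the extra terms in \eqref{staszek05} cancel because $\efw(\chi_{\{w\neq0\}})=1$ a.e.\ $[\mu_w]$; your variant is marginally cleaner and equally valid.
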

\begin{proof}
For this we fix $\sigma\in\rbb$. It is obvious that the domain of the left hand side operator of the inclusion above is equal to $\dz{\cfw}=\{f\in L^2(\mu)\colon \int_X |f|^2\hfw\D\mu<\infty\}$. On the other hand, $f\in \dz{\cfw E_B(\sigma)}$ if and only if $E_B(\sigma)f\in \dz{\cfw}$ or, equivalently, $\int_X|E_B(\sigma) f|^2\hfw\D\mu<\infty$. In view of \eqref{staszek01}, we have
\begin{align*}
\int_X|E_B(\sigma) f|^2\hfw\D\mu\leqslant \int_X|f|^2\hfw\D\mu <+\infty,\quad f\in\dz{\cfw}.
\end{align*}
Therefore, we see that
\begin{align}\label{staszek06}
\dz{E_A(\sigma) \cfw} \subseteq \dz{\cfw E_B(\sigma)}.
\end{align}
By \eqref{staszek02}, for every $f\in\dz{\cfw}$ we have
\begin{align}\label{staszek05}
E_A(\sigma) \cfw f&=\chi_{\varGamma_\sigma}\psf (w f\circ\phi)+\chi_{\sigma}(0)(I-\psf )(wf\circ\phi)\notag\\
&=\chi_{\varGamma_\sigma}w (f\circ\phi) \efw(\chi_{\{w\neq 0\}})+\chi_{\sigma}(0) w(f\circ\phi) - \chi_{\sigma}(0) w (f\circ\phi)\efw(\chi_{\{w\neq 0\}}).
\end{align}
On the other hand, by \eqref{staszek01} and \eqref{staszek06}, for every $f\in\dz{\cfw}$ we have
\begin{align}\label{staszek07}
\cfw E_B(\sigma)f&=w (f \circ \phi) (\chi_{\varDelta_\sigma}\circ\phi)= \chi_{\varGamma_\sigma} w (f \circ \phi).
\end{align}
Comparing \eqref{staszek05} and \eqref{staszek07} we see that
\begin{align}\label{staszek08}
E_A(\sigma) \cfw f= \cfw E_B(\sigma)f,\quad f\in\dz{\cfw},
\end{align}
whenever $w\neq0$ a.e. $[\mu]$. Since $\chi_{\{w=0\}}L^2(\mu)\subseteq \jd{\cfw^*}$ (by \cite[Proposition 17]{2018-lnim-budzynski-jablonski-jung-stochel}) and $\sigma\in \borel{\rbb}$ can be arbitrary, we deduce from \eqref{staszek06} and \eqref{staszek08} that \eqref{staszek03} holds whenever $\jd{\cfw^*}=\{0\}$.
\end{proof}
\begin{rem}
Regarding \eqref{staszek06}, it may be worth noticing that a similar looking inclusion
\begin{align}\label{staszek03'}
\dz{E_B(\sigma) \cfw }\subseteq \dz{\cfw E_A(\sigma)},\quad \sigma\in \borel{\rbb}
\end{align}
holds whenever $\cfw$ is spectrally weakly centered. This can be proved in the following way. We fix $\sigma\in \borel{\rbb}$. Clearly, $\dz{E_B(\sigma) \cfw}=\dz{\cfw}$. By \eqref{staszek02}, we see that $\{f\in \dz{\cfw}\colon \psf f\in \dz{\cfw}\}\subseteq \dz{\cfw E_A(\sigma)}$. Hence, it suffices to show that
\begin{align}\label{niemiecki01}
\dz{\cfw}\subseteq \{f\in \dz{\cfw}\colon \psf  f\in \dz{\cfw}\}.
\end{align}
The above holds whenever $\int_X |\psf f|^2\hfw\D\mu<\infty$ for every $f\in\dz{\cfw}$. Since $\hfw=\efw(\hfw)$ a.e. $[\mu_w]$, as $\cfw$ is assumed to be spectrally weakly centered, we have
\begin{align*}
\int_X |\psf f|^2\hfw\D\mu&\leqslant\int_X \efw\big(|f_w|^2\big)\hfw\D\mu_w\\
&=\int_X|f_w|^2\hfw\D\mu_w\leqslant\int_X|f|^2\hfw\D\mu<+\infty, \quad f\in\dz{\cfw}
\end{align*}
Hence, \eqref{niemiecki01} is satisfied. As a consequence, we get the following \eqref{staszek03'}.
\end{rem}
Our next step is to prove the following
\begin{lem}\label{staszek09l}
Assume $\cfw$ is spectrally weakly centered and hyponormal. Then the following condition holds \begin{align}\label{staszek09}
F_B(t)\leqslant F_A(t),\quad t\in \rbb.
\end{align}
\end{lem}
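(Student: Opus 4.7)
The plan is to exploit spectral commutativity (guaranteed by spectral weak centeredness) in order to reduce \eqref{staszek09} to the triviality of a certain subspace intersection, and then to derive a contradiction from hyponormality. Since $A=\cfw\cfw^*$ and $B=\cfw^*\cfw$ are positive self-adjoint with commuting spectral measures (by the definition of spectrally weakly centered together with \cite[Propositions 4.23 and 5.15]{schmudgen}), $F_B(t)$ and $F_A(t)$ are commuting orthogonal projections. Hence \eqref{staszek09} is equivalent to the assertion that $\mathcal{R}(E_B((-\infty,t]))\cap \mathcal{R}(E_A((t,\infty)))=\{0\}$ for every $t\in\rbb$.

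For $t<0$ the inequality is vacuous since $A,B\geqslant 0$ force $F_A(t)=F_B(t)=0$. For $t\geqslant 0$ I would argue by contradiction. Suppose $f\neq 0$ lies in the intersection. Because $\mathcal{R}(E_B([0,t]))\subseteq \dz{B^{1/2}}=\dz{\cfw}$, spectral calculus gives
\begin{align*}
\|\cfw f\|^2=\langle Bf,f\rangle=\int_{[0,t]} s\, d\langle E_B(s)f,f\rangle\leqslant t\|f\|^2.
\end{align*}
Hyponormality places $f$ in $\dz{\cfw^*}=\dz{A^{1/2}}$ and yields $\|\cfw^*f\|\leqslant \|\cfw f\|$, while spectral calculus applied to $f\in \mathcal{R}(E_A((t,\infty)))$ gives
\begin{align*}
\|\cfw^*f\|^2=\langle Af,f\rangle=\int_{(t,\infty)} s\, d\langle E_A(s)f,f\rangle > t\|f\|^2,
\end{align*}
the strict inequality being forced by $s-t>0$ throughout $(t,\infty)$ and the nontriviality of the measure $\langle E_A(\cdot)f,f\rangle$, whose total mass on $(t,\infty)$ equals $\|f\|^2>0$. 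These three estimates combine into $t\|f\|^2<\|\cfw^*f\|^2\leqslant \|\cfw f\|^2\leqslant t\|f\|^2$, a contradiction.

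The only delicate point is the strict inequality $\|\cfw^*f\|^2>t\|f\|^2$; it depends on the integration being over the \emph{open} interval $(t,\infty)$, so that the integrand $s-t$ stays strictly positive on the entire support of a nonzero measure. Everything else reduces to standard spectral calculus together with the identifications $|\cfw|=B^{1/2}=M_{\sqrt{\hfw}}$, $|\cfw^*|=A^{1/2}$ and the hyponormality hypothesis $\dz{\cfw}\subseteq \dz{\cfw^*}$ with $\|\cfw^*f\|\leqslant \|\cfw f\|$.
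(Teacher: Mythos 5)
Your argument is correct, but it takes a genuinely different route from the paper. The paper's proof observes that \eqref{staszek09} says precisely $A\preccurlyeq B$ in Olson's spectral order and then invokes \cite[Corollary 9.6]{2012-jmaa-planeta-stochel}, which reduces $A\preccurlyeq B$ to the conjunction of $A\leqslant B$ (obtained from hyponormality via $\dz{B^{1/2}}=\dz{\cfw}\subseteq\dz{\cfw^*}=\dz{A^{1/2}}$ and $\|A^{1/2}f\|\leqslant\|B^{1/2}f\|$) and commutativity of $F_A(t)$ with $F_B(t)$ (immediate from spectral weak centeredness). You use exactly the same two ingredients but bypass the spectral-order machinery entirely: commutativity lets you rephrase $F_B(t)\leqslant F_A(t)$ as $\mathcal{R}(F_B(t))\cap\mathcal{R}\big(E_A((t,\infty))\big)=\{0\}$, and a putative nonzero $f$ in that intersection is killed by the chain $t\|f\|^2<\|\cfw^*f\|^2\leqslant\|\cfw f\|^2\leqslant t\|f\|^2$. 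In effect you give a direct, self-contained proof of the special case of the Płaneta--Stochel criterion that is actually needed here; what this buys is independence from the cited result, at the cost of a slightly longer argument. All the delicate points check out: the equivalence with trivial intersection does require the projections to commute (which you have), the strict inequality $\|\cfw^*f\|^2>t\|f\|^2$ is justified because $s\mapsto s-t$ is strictly positive on the support of the nonzero measure $\is{E_A(\cdot)f}{f}$ restricted to $(t,\infty)$, and membership $f\in\dz{\cfw}$ (needed to apply hyponormality) follows from $f\in\mathcal{R}(E_B([0,t]))\subseteq\dz{B^{1/2}}=\dz{|\cfw|}=\dz{\cfw}$.
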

\begin{proof}
Condition \eqref{staszek09}, by definition, means that $A\preccurlyeq B$, where ''$\preccurlyeq$'' is Olson's spectral order (see \cite{1971-pams-olson} for a bounded case and \cite{2012-jmaa-planeta-stochel} for an unbounded one). According to \cite[Corollary 9.6]{2012-jmaa-planeta-stochel}, $A\preccurlyeq B$ is satisfied if and only if $A\leqslant B$ and $F_A(t)F_B(t)=F_B(t)F_A(t)$ for every $t\in\rbb$. The latter condition is holds since $\cfw$ is spectrally weakly centered. The former follows from hyponormality of $\cfw$. Indeed, we first note that 
\begin{align}\label{prosiak01}
\dz{B^{\frac12}}=\dz{|\cfw|}=\dz{\cfw}\subseteq\dz{\cfw^*}= \dz{|\cfw^*|}=\dz{A^{\frac12}}.
\end{align}
Moreover, we have
\begin{align}\label{prosiak02}
\|A^{\frac12}f\|=\|U|\cfw^*|f\|=\|\cfw^* f\|\leqslant\|\cfw f\|=\|V|\cfw|f\|=\|B^{\frac12}f\|, \quad f\in \dz{A^{\frac12}},
\end{align}
where $U$ and $V$ are the phases of $\cfw^*$ and $\cfw$, respectively. Both \eqref{prosiak01} and \eqref{prosiak02} together imply that $A\leqslant B$. Consequently, \eqref{staszek09} is satisfied.
\end{proof}
We now are ready to prove a criterion for existence of invariant subspaces for unbounded wco's.
\begin{thm}\label{smutek01}
Let $\cfw$ be densely defined, $\efw(\hfw)=\hfw$ a.e. $[\mu_w]$, and $\hfw\circ\phi\leqslant \hfw$ a.e. $[\mu_w]$. Then there is a nontrivial invariant subspace $\mathcal{M}$ for $\cfw$. If kernels of $\cfw$ and $\cfw^*$ are trivial, and $\cfw$ is not a scalar multiple of an isometry, then the subspace $\mathcal{M}$ is of the form $\chi_{\{\hfw>t_0\}}L^2(\mu)$ for some $t\in\rbb$.
\end{thm}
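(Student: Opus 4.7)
The plan is to construct the invariant subspace explicitly as $\mathcal{M} = \chi_{\{\hfw > t_0\}} L^2(\mu)$ for a suitably chosen $t_0 \in \rbb$; note that, in the notation of \eqref{staszek01}, $\mathcal{M} = \mathcal{R}(I - F_B(t_0))$.

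Invariance of $\mathcal{M}$ holds for every $t_0 \in \rbb$ and rests on the hypothesis $\hfw\circ\phi \leqslant \hfw$ a.e.\ $[\mu_w]$ alone. A direct calculation suffices: for $f = \chi_{\{\hfw > t_0\}} g \in \dz{\cfw}$ one has
\begin{align*}
\cfw f = w \cdot \chi_{\{\hfw\circ\phi > t_0\}} \cdot (g\circ\phi),
\end{align*}
and on $\{\hfw \leqslant t_0\} \cap \{w \neq 0\}$ the inequality $\hfw\circ\phi \leqslant \hfw \leqslant t_0$ forces $\chi_{\{\hfw\circ\phi > t_0\}} = 0$, so the product vanishes outside $\{\hfw > t_0\}$. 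The same conclusion is delivered by the spectral picture: Lemmas \ref{staszek03l} and \ref{staszek09l} (when $\jd{\cfw^*} = \{0\}$) give $F_A(t_0)\cfw \subseteq \cfw F_B(t_0)$ and $F_B(t_0) \leqslant F_A(t_0)$, so for $f \in \mathcal{M} \cap \dz{\cfw}$ (whence $F_B(t_0) f = 0$) we get $F_A(t_0) \cfw f = \cfw F_B(t_0) f = 0$ and then $F_B(t_0) \cfw f = F_B(t_0) F_A(t_0) \cfw f = 0$, giving $\cfw f \in \mathcal{M}$.

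The second assertion follows quickly. The hypothesis $\jd{\cfw} = \{0\}$ is equivalent to $\hfw > 0$ a.e.\ $[\mu]$; and $\cfw$ not being a scalar multiple of an isometry precludes $\hfw$ being essentially constant (otherwise $\hfw \equiv c > 0$ a.e.\ together with $\jd{\cfw} = \{0\}$ gives $\cfw = \sqrt{c}\, U$ with $U$ an isometry via the polar decomposition). Since $\cfw$ is densely defined, $\hfw < \infty$ a.e., so the essential range of $\hfw$ lies in $(0,\infty)$ and has at least two distinct points. A standard measure-theoretic argument then furnishes $t_0 \in [0,\infty)$ with $\mu(\{\hfw > t_0\}) > 0$ and $\mu(\{\hfw \leqslant t_0\}) > 0$, so $\mathcal{M}$ is a proper nontrivial invariant subspace.

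For the first assertion without the extra hypotheses, the argument proceeds by case analysis. If $\cfw = 0$, any nontrivial subspace is invariant. If $\jd{\cfw} \neq \{0\}$ (and $\cfw \neq 0$), then $\jd{\cfw}$ itself is a nontrivial closed invariant subspace. If $\jd{\cfw^*} \neq \{0\}$, then $\overline{\mathcal{R}(\cfw)} = \jd{\cfw^*}^\perp$ is a proper closed invariant subspace. If $\cfw$ is a scalar multiple of an isometry (i.e., $\hfw$ is essentially constant and positive), then $\cfw$ is bounded, hyponormal, and weakly centered (the latter since $\efw(\hfw) = \hfw$ holds trivially for constant $\hfw$), so Campbell's Theorem \ref{stach01} supplies a nontrivial invariant subspace. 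When none of these cases applies, the hypotheses of the second assertion are satisfied and the construction above yields $\mathcal{M}$ of the stated form. The main obstacle is organizing this case analysis cleanly; the core ingredient --- invariance of $\chi_{\{\hfw > t_0\}} L^2(\mu)$ and existence of a suitable threshold $t_0$ --- is the smooth part.
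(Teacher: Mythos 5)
Your proof is correct, and the core of it takes a genuinely different and more elementary route than the paper. The paper reaches invariance of $\mathcal{M}_{t}=\chi_{\{\hfw>t\}}L^2(\mu)=\jd{F_B(t)}$ through the spectral machinery: it first upgrades the hypotheses to ``spectrally weakly centered and hyponormal'' via Theorem \ref{gamon01} and then combines the intertwining relation $F_A(t)\cfw\subseteq\cfw F_B(t)$ of Lemma \ref{staszek03l} (which needs $\jd{\cfw^*}=\{0\}$) with the spectral-order inequality $F_B(t)\leqslant F_A(t)$ of Lemma \ref{staszek09l} (which rests on Olson's spectral order via \cite[Corollary 9.6]{2012-jmaa-planeta-stochel}). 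Your direct computation --- $f=\chi_{\{\hfw>t_0\}}f$ forces $\cfw f=w\,\chi_{\{\hfw\circ\phi>t_0\}}(f\circ\phi)$, which vanishes a.e.\ on $\{\hfw\leqslant t_0\}$ because $\hfw\circ\phi\leqslant\hfw$ a.e.\ $[\mu_w]$ --- uses only one of the two hypotheses, needs no kernel assumption, and avoids the spectral lemmas entirely; what it buys is a short, self-contained argument, while the paper's route buys the structural information (commuting spectral projections, spectral order) that the surrounding section is built to exhibit. The degenerate-case analysis ($\cfw=0$, nontrivial kernels, scalar multiple of an isometry) matches the paper's. The one place where you do less than the paper: in each case the paper also verifies that the invariance is non-vacuous, i.e.\ that $\mathcal{M}\cap\dz{\cfw}\neq\{0\}$ --- this is the point of the $\dz{\cfw^2}\neq\{0\}$ argument when $\jd{\cfw^*}\neq\{0\}$, and of the construction of $g=\chi_\varOmega$ with $\varOmega\subseteq\{\hfw>t_0\}\cap\{\hfw\leqslant n_0\}$ of finite positive measure in the main case. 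Under the paper's literal definition of an invariant subspace this verification is not required, so your proof stands, but you should add the one-line argument (a finite-measure subset of $\{t_0<\hfw\leqslant n_0\}$ gives a nonzero element of $\mathcal{M}_{t_0}\cap\dz{\cfw}$) to match what the theorem is implicitly asserting.
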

\begin{proof}
By Theorem \ref{gamon01} and \cite[Theorem 53]{2018-lnim-budzynski-jablonski-jung-stochel}, $\cfw$ is  spectrally weakly centered and hyponormal. We may assume that $\cfw\neq 0$ and $\cfw$ is not a scalar multiple of an isometry as in this case the claim is satisfied. We also may assume that the kernel $\jd{\cfw}=\{0\}$ since otherwise $\jd{\cfw}$ is a nontrivial invariant subspace of $\cfw$.

Now assume that $\jd{\cfw^*}\neq 0$. Since $L^2(\mu)=\jd{\cfw^*}\oplus \overline{\mathcal{R}(\cfw)}$ and $\cfw f\in \overline{\mathcal{R}(\cfw)}$ for every $f\in \dz{\cfw}$ it suffices to show that $\dz{\cfw^2}\neq \{0\}$ to prove that $\overline{\mathcal{R}(\cfw)}$ is a nontrivial invariant subspace we look for. Suppose that $\dz{\cfw^2}=\{0\}$. In view of \cite[Lemma 44]{2018-lnim-budzynski-jablonski-jung-stochel}, we have $\overline{\cfw^2}=C_{\phi^2,\hat w_2}$, where $\phi^2=\phi\circ \phi$ and $\hat w_2=w\cdot (w\circ\phi)$. Thus $\dz{C_{\phi^2,\hat w_2}}=\{0\}$. This implies that $\hsf_{\phi^2,\hat w_2}=\infty$ a.e. $[\mu]$ (see \cite[Proposition 8]{2018-lnim-budzynski-jablonski-jung-stochel}). Consequently, $\hsf_{\phi^2,\hat w_2}=\infty$ a.e. $[\hfw\D\mu]$ and so, by \cite[Lemma 5]{2018-lnim-budzynski-jablonski-jung-stochel}, we see that $\hsf_{\phi^2,\hat w_2}\circ \phi=\infty$ a.e. $[\mu_w]$. This is in clear contradiction with
\begin{align}\label{gamon02}
\hsf_{\phi^2,\hat w_2}\circ \phi= \efw(\hfw)\cdot (\hfw\circ\phi)\leqslant \hfw^2\quad \text{a.e $[\mu_w]$},
\end{align}
which follows from Theorem \ref{gamon01}, \cite[Theorem 53]{2018-lnim-budzynski-jablonski-jung-stochel}, and \cite[Lemma 26]{2018-lnim-budzynski-jablonski-jung-stochel}, and the fact that $\hfw<\infty$ a.e. $[\mu]$, which holds since $\cfw$ is densely defined. Therefore $\dz{\cfw^2}\neq \{0\}$ and thus ${\mathcal{R}(\cfw)}$ is a nontrivial invariant subspace of $\cfw$. Henceforth, we may assume that $\jd{\cfw^*}= \{0\}$. 

For $t\in\rbb_+$ we set $\mathcal{M}_t:=\jd{E_B((-\infty,t])}=\jd{F_B(t)}$. By Lemmas \ref{staszek03l} and \ref{staszek09l}, we get 
\begin{align*}
\cfw \big(\dz{\cfw}\cap \mathcal{M}_t\big)\subseteq \jd{F_A(t)} \subseteq \mathcal{M}_t,\quad t\in\rbb_+.
\end{align*}
We now see that finishing the proof amounts to showing that for some $t_0\in\rbb_+$, $\dz{\cfw}\cap \mathcal{M}_{t_0}$ is nontrivial and that $\mathcal{M}_{t_0}$ is of the form $\chi_{\{\hfw>t_0\}}L^2(\mu)$. The latter follows immediately from \eqref{staszek01}. The former can be proved as follows. Let $t_0\in\rbb_+$ be so as $\mu\big(\{\hfw>t_0\}\big)>0$ and $\mu\big(X\setminus\{\hfw>t_0\}\big)>0$. Such a $t_0$ exists as otherwise $\hfw$ is constant a.e. $[\mu]$ and this means that $\cfw$ is a scalar multiple of an isometry. Clearly $\{0\}\neq \mathcal{M}_{t_0}\neq L^2(\mu)$. Moreover, $\dz{\cfw}\cap \mathcal{M}_{t_0}=\chi_{\{\hfw>t_0\}}\dz{\cfw}\neq\{0\}$. Indeed, if $\chi_{\{\hfw>t_0\}}\dz{\cfw}=\{0\}$, then for every $f\in \dz{\cfw}$ we have $f=\chi_{\{\hfw\leqslant t_0\}}f$. However, since $0<\mu\big(\{\hfw>t_0\}\big)$ and $\mu$ is $\sigma$-finite, there exists $\omega\subseteq \{\hfw>t_0\}$ such that $0<\mu(\omega)<+\infty$. The Radon-Nikodym derivative $\hfw$ is finite a.e. $[\mu]$ so $\{\hfw\leqslant n\}\nearrow X$ as $n\to+\infty$. This means the for some $n_0\in\nbb$, the set $\varOmega=\omega\cap \{\hfw\leqslant n_0\}$ satisfies $0<\mu(\varOmega)<+\infty$. If that is the case, then $g=\chi_\varOmega\in \dz{\cfw}$, $g=\chi_{\{\hfw>t_0\}}g$ and $g\neq 0$. This contradicts $\chi_{\{\hfw>t_0\}}\dz{\cfw}=\{0\}$ and so the proof is complete.
\end{proof}
\section{Acknowledgments}
I wish to express my deep gratitude to Professor Marek Ptak and his wife for their hospitality and support during and beyond preparation of this paper. 


\bibliographystyle{amsalpha}

\end{document}